\newcommand\footnoteref[1]{\protected@xdef\@thefnmark{\ref{#1}}\@footnotemark}
\theoremstyle{plain}
\newtheorem{theorem}{Theorem}[section]
\newtheorem{proposition}[theorem]{Proposition}
\newtheorem{lemma}[theorem]{Lemma}
\theoremstyle{definition}
\newtheorem{definition}[theorem]{Definition}
\theoremstyle{remark}
\newtheorem{remark}[theorem]{Remark}
\numberwithin{equation}{section}
\DeclareMathOperator{\Aut}{Aut}
\DeclareMathOperator{\Gal}{Gal}
\DeclareMathOperator{\GL}{GL}
\DeclareMathOperator{\Jac}{Jac}
\DeclareMathOperator{\PGL}{PGL}
\DeclareMathOperator{\PSL}{PSL}
\DeclareMathOperator{\SL}{SL}
\DeclareMathOperator{\Sp}{Sp}
\DeclareMathOperator{\Twist}{Twist}
\DeclareMathOperator{\Prob}{P}
\def\kbar{\overline{k}}
\def\ie{\textit{i.e. }}
\newcommand{\Id}{\textrm{id}}
\newcommand{\Mm}{\mathsf{M}}
\newcommand{\FF}{\mathbb{F}}
\newcommand{\CC}{\mathbb{C}}
\newcommand{\PP}{\mathbb{P}}
\newcommand{\RR}{\mathbb{R}}
\newcommand{\bfM}{\mathbf{M}}
\newcommand{\AG}{\mathbf{A}}
\newcommand{\CG}{\mathbf{C}}
\newcommand{\DG}{\mathbf{D}}
\newcommand{\Gg}{\mathbf{G}}
\newcommand{\SG}{\mathbf{S}}
\newcommand{\cc}{\mathcal{C}}
\newcommand{\sS}{\mathcal{S}}
\title[Parametrizing the moduli space]{Parametrizing the moduli space of curves
  and applications to smooth plane quartics over finite fields}
\author[Lercier]{Reynald Lercier}
\address{%
  Reynald Lercier, %
  DGA \& Univ Rennes, %
  CNRS, IRMAR - UMR 6625, F-35000
  Rennes, %
  France. %
}
\email{reynald.lercier@m4x.org}
\author[Ritzenthaler]{Christophe Ritzenthaler}
\address{%
  Christophe Ritzenthaler, %
  Univ Rennes, %
  CNRS, IRMAR - UMR 6625, %
  F-35000 Rennes, %
  France. %
}
\email{christophe.ritzenthaler@univ-rennes1.fr }
\author[Rovetta]{Florent Rovetta}
\address{Florent Rovetta, %
  Institut de Math{\'e}matiques de Luminy, %
  UMR 6206 du CNRS, %
  Luminy, Case 907, 13288 Marseille, France.}
\email{florent.rovetta@univ-amu.fr}
\author[Sijsling]{Jeroen Sijsling}
\address{Jeroen Sijsling, Mathematics Institute, Zeeman Building, University of
  Warwick, Coventry CV4 7AL, United Kingdom. }
\curraddr{Jeroen Sijsling, Institut für Reine Mathematik, Universität Ulm,
  Helmholtzstrasse 18, 89081 Ulm, Germany.}
\email{jeroen.sijsling@uni-ulm.de}
\thanks{The authors acknowledge support by grant ANR-09-BLAN-0020-01. The
fourth author was additionally supported by a Marie Curie Fellowship
IEF-GA-2011-299887.}
\date{\today}
\subjclass[2010]{14Q05 (primary); 13A50, 14H10, 14H37 (secondary)}
\keywords{Genus $3$ curves ; plane quartics ; moduli ; families ; enumeration ;
finite fields}
\begin{document}

\begin{abstract}
  We study new families of curves that are suitable for efficiently
  parametrizing their moduli spaces. We explicitly construct such families for
  smooth plane quartics in order to determine unique representatives for the
  isomorphism classes of smooth plane quartics over finite fields. In this way,
  we can visualize  the distributions of their traces of Frobenius. This leads
  to new observations on fluctuations with respect to the limiting symmetry
  imposed by the theory of Katz and Sarnak.
\end{abstract}

\maketitle

\section{Introduction}
\label{sec:introduction}

One of the central notions in arithmetic geometry is the \emph{(coarse) moduli
space of curves}  of a given genus $g$, denoted $\Mm_g$. These are algebraic
varieties whose geometric points classify these curves up to isomorphism. The
main difficulty when dealing with moduli spaces -- without extra structure --
is the non-existence of \emph{universal families}, whose construction would
allow one to explicitly write down the curve corresponding to a point of this
space. Over finite fields, the existence of a universal family would lead to
optimal algorithms to write down isomorphism classes of curves. Having these
classes at one's disposal is useful in many applications. For instance, it
serves for constructing curves with many points using class field
theory~\cite{rokaeus} or for enlarging the set of curves useful for
pairing-based cryptography as illustrated in genus $2$
by~\cite{FS11,gv12,satoh}.  More theoretically, it was used in~\cite{berg} to
compute the cohomology of moduli spaces. We were ourselves drawn to this
subject by the study of Serre's obstruction for smooth plane quartics (see
Section~\ref{subsec:dist}).

The purpose of this paper is to introduce three substitutes for the notion of a
universal family. The best replacement for a universal family seems to be that
of a \emph{representative family}, which we define in
Section~\ref{sec:famdefs}. This is a family of curves $\cc \to \sS$ whose
points are in natural bijection with those of a given subvariety $S$ of the
moduli space. Often the scheme $\sS$ turns out to be isomorphic to $S$, but the
notion is flexible enough to still give worthwhile results when this is not the
case. Another interesting feature of these families is that they can be made
explicit in many cases when $S$ is a stratum of curves with a given
automorphism group. We focus here on the case of non-hyperelliptic genus $3$
curves, canonically realized as \emph{smooth plane quartics}.

The overview of this paper is as follows. In Section~\ref{sec:famdefs} we
introduce and study three new notions of families of curves. We indicate the
connections with known constructions from the literature. In
Proposition~\ref{prop:desrep} and Proposition~\ref{prop:iter}, we also uncover
a link between the existence of a representative family and the question of
whether the field of moduli of a curve is a field of definition. In
Section~\ref{sec:famquart} we restrict our considerations to the moduli space
of smooth plane quartics. After a review of the stratification of this moduli
space by automorphism groups, our main result in this section is
Theorem~\ref{thm:smallstrata}. There we construct representative families for
all but the two largest of these strata by applying the technique of Galois
descent.  For the remaining strata we improve on the results in the literature
by constructing families with fewer parameters, but here much room for
improvement remains. In particular, it would be nice to see an explicit
representative (and in this case universal) family over the stratum of smooth
plane quartics with trivial automorphism group.

Parametrizing by using our families, we get one representative curve per
$\bar{k}$-isomorphism class.  Section~\ref{sec:computation-twists} refines
these into $k$-isomorphism classes by constructing the \emph{twists} of the
corresponding curves over finite fields $k$.  Finally,
Section~\ref{sec:impl-exper} concludes the paper by describing the
implementation of our enumeration of smooth plane quartics over finite fields,
along with the experimental results obtained on distributions of traces of
Frobenius for these curves over $\FF_p$ with $11 \leq p \leq 53$. In order to
obtain exactly one representative for every isomorphism class of curves, we use
the previous results combined with an iterative strategy that constructs a
complete database of such representatives by ascending up the automorphism
strata\footnote{Databases and statistics summarizing our results can be found
  at~\url{http://perso.univ-rennes1.fr/christophe.ritzenthaler/programme/qdbstats-v3_0.tgz}.
}.\\

\textit{Notations.} Throughout, we denote by $k$ an arbitrary field of
characteristic $p \geq 0$, with algebraic closure $\kbar$. We use $K$ to denote
a general algebraically closed field. By $\zeta_n$, we denote a fixed choice of
$n$-th root of unity in $\kbar$ or $K$; these roots are chosen in such a way to
respect the standard compatibility conditions when raising to powers.  Given
$k$, a \emph{curve} over $k$ will be a smooth and proper absolutely irreducible
variety of dimension $1$ and genus $g>1$ over $k$.

In agreement with~\cite{lrs}, we keep the notation $\CG_n$ (resp. $\DG_{2n}$,
resp. $\AG_n$, resp. $\SG_n$) for the cyclic group of order $n$ (resp.\ the
dihedral group of order $2n$, resp.\ the alternating group of order $n!/2$,
resp.\ the symmetric group of order $n!$). We will also encounter $\Gg_{16}$, a
group of $16$ elements that is a direct product $\CG_4 \times \DG_4$,
$\Gg_{48}$, a group of $48$ elements that is a central extension of $\AG_4$ by
$\CG_4$, $\Gg_{96}$, a group of $96$ elements that is a semidirect product
$(\CG_4 \times \CG_4) \rtimes \SG_3$ and $\Gg_{168}$, which is a group of $168$
elements isomorphic to $\PSL_2 (\FF_7)$.\medskip

\subsection*{Acknowledgments}
We would like to thank Jonas Bergstr\"om, Bas Edixhoven, Everett Howe, Frans
Oort and Matthieu Romagny for their generous help during the writing of this
paper. Also, we warmly thank the anonymous referees for carefully reading this
work and for suggestions.

\section{Families of curves}
\label{sec:famdefs}

Let $g>1$ be an integer, and let $k$ be a field of characteristic $p = 0$ or
$p>2g+1$.  For $\sS$ a scheme over $k$, we define a \emph{curve of genus $g$}
over $\sS$ to be a morphism of schemes $\cc \rightarrow \sS$ that is proper and
smooth with geometrically irreducible fibers of dimension $1$ and genus $g$.
Let $\Mm_g$ be the coarse moduli space of curves of genus $g$ whose geometric
points over algebraically closed extensions $K$ of $k$ correspond with the
$K$-isomorphism classes of curves $C$ over $K$.

We are interested in studying the subvarieties of $\Mm_g$ where the
corresponding curves have an automorphism group isomorphic with a given group.
The subtlety then arises that these subvarieties are not necessarily
irreducible. This problem was also mentioned and studied in~\cite{magaard}, and
resolved by using Hurwitz schemes; but in this section we prefer another way
around the problem, due to L\o{}nsted in~\cite{lonsted}.

In~\cite[Sec.6]{lonsted} the moduli space $\Mm_g$ is stratified in a finer way,
namely by using `rigidified actions' of automorphism groups. Given an
automorphism group $G$, L\o{}nsted defines subschemes of $\Mm_g$ that we shall
call \emph{strata}. Let $\ell$ be a prime different from $p$, and let
$\Gamma_{\ell} = \Sp_{2g} (\FF_{\ell})$. Then the points of a given stratum $S$
correspond to those curves $C$ for which the induced embedding of $G$ into the
group ($\cong \Gamma_{\ell}$) of polarized automorphisms of $\Jac(C)[\ell]$ is
$\Gamma_{\ell}$-conjugate to a given group. Combining~\cite[Th.1]{homma}
with~\cite[Th.6.5]{lonsted} now shows that under our hypotheses on $p$, such a
stratum is a locally closed, connected and smooth subscheme of $\Mm_g$.  If $k$
is perfect, such a connected stratum is therefore defined over $k$ if only one
rigidification is possible for a given abstract automorphism group. As was also
observed in~\cite{magaard}, this is not always the case; and as we will see in
Remark~\ref{rem:conj}, in the case of plane quartics these subtleties are only
narrowly avoided.

We return to the general theory. Over the strata $S$ of $\Mm_g$ with
non-trivial automorphism group, the usual notion of a universal family (as
in~\cite[p.25]{newstead}) is of little use.  Indeed, no universal family can
exist on the non-trivial strata; by~\cite[Sec.14]{abramovich}, $S$ is a fine
moduli space (and hence admits a universal family) if and only if the
automorphism group is trivial. In the definition that follows, we weaken this
notion to that of a \emph{representative family}. While such families coincide
with the usual universal family on the trivial stratum, it will turn out (see
Theorem~\ref{thm:smallstrata}) that they can also be constructed for the strata
with non-trivial automorphism group.  Moreover, they still have sufficiently
strong properties to enable us to effectively parametrize the moduli space.

\begin{definition}\label{def:arith}
  Let $S \subset \Mm_g$ be a subvariety of $\Mm_g$ that is defined over $k$.
  Let $\cc \rightarrow \sS$ be a family of curves whose geometric fibers
  correspond to points of the subvariety $S$, and let $f_{\cc} : \sS \to S$ be
  the associated morphism.
  \begin{enumerate}
    \item The family $\cc \rightarrow \sS$ is \emph{geometrically surjective}
      (for $S$) if the map  $f_{\cc}$ is surjective on $K$-points for every
      algebraically closed extension $K$ of $k$.
    \item The family $\cc \rightarrow \sS$ is \emph{arithmetically surjective}
      (for $S$) if the map  $f_{\cc}$ is surjective on $k'$-points for every
      finite extension $k'$ of $k$.
    \item The family $\cc \rightarrow \sS$ is \emph{quasifinite}
      (for $S$) if it is geometrically surjective and $f_{\cc}$ is quasifinite.
    \item The family $\cc \rightarrow \sS$ is \emph{representative} (for $S$) if
      $f_{\cc}$ is bijective on $K$-points for every algebraically closed
      extension $K$ of $k$.
  \end{enumerate}
\end{definition}

\begin{remark}\label{rem:EGA}
  A family $\cc \rightarrow \sS$ is geometrically surjective if and only if the
  corresponding morphism of schemes $\sS \to S$ is surjective.

  Due to inseparability issues, the morphism $f_{\cc}$ associated to a
  representative family need not induce bijections on points over arbitrary
  extensions of $k$.

  Note that if a representative family $\sS$ is absolutely irreducible, then
  since $S$ is normal, we actually get that $f_{\cc}$ is an isomorphism by
  Zariski's Main Theorem.  However, there are cases where we were unable to
  find such an $\sS$ given a stratum $S$ (see Remark~\ref{rem:nonfin2}).

  The notions of being geometrically surjective, quasifinite and representative
  are stable under extension of the base field $k$. On the other hand, being
  arithmetically surjective can strongly depend on the base field, as for
  example in Proposition~\ref{prop:c2}.
\end{remark}

To prove that quasifinite families exist, one typically considers the universal
family over $\Mm_g^{(\ell)}$ (the moduli space of curves of genus $g$ with full
level-$\ell$ structure, for a prime $\ell>2$ different from $p$,
see~\cite[Th.13.2]{abramovich}). This gives a quasifinite family over $\Mm_g$
by the forgetful (and in fact quotient) map $\Mm_g^{(\ell)} \to \Mm_g$ that we
will denote $\pi_{\ell}$ when using it in our constructions below.

Let $K$ be an algebraically closed extension of $k$. Given a curve $C$ over
$K$, recall that an intermediate field $k \subset L \subset K$ is a \emph{field
of definition} of $C$ if there exists a curve $C_0/L$ such that $C_0$ is
$K$-isomorphic to $C$. The concept of representative families is related with
the question of whether the \emph{field of moduli} $\bfM_C$ of the curve $C$,
which is by definition the intersection of the fields of definition of $C$, is
itself a field of definition. Since we assumed that $p>2g+1$ or $p = 0$, the
field $\bfM_C$ then can be recovered more classically as the residue field of
the moduli space $\Mm_g$ at the point $[C]$ corresponding to $C$
by~\cite[Cor.1.11]{seki}. This allows us to prove the following.

\begin{proposition}\label{prop:desrep}
  Let $S$ be a subvariety of $\Mm_g$ defined over $k$ that admits a
  representative family $\cc \to \sS$. Let $C$ be a curve over an algebraically
  closed extension $K$ of $k$ such that the point $[C]$ of $\Mm_g (K)$ belongs
  to $S$. Then $C$ descends to its field of moduli $\bfM_C$. In case $k$ is
  perfect and $K = \kbar$, then $C$ even corresponds to an element of $\sS
  (\bfM_C)$.
\end{proposition}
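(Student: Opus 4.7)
The plan is to leverage the identification of $\bfM_C$ as the residue field of $\Mm_g$ at the image of $[C]$, together with the bijectivity of $f_{\cc}$ on geometric points, to exhibit a canonical $\bfM_C$-valued point of $\sS$. Once such a point is in hand, pulling back the family $\cc$ along it will produce the required descent.

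First, by the identification from~\cite[Cor.1.11]{seki} recalled above, the $K$-point $[C]$ factors as $\Spec(K) \to \Spec(\bfM_C) \xrightarrow{s_0} S$ for a canonical point $s_0 \in S(\bfM_C)$. I would then form the fiber $T := \sS \times_{S, s_0} \Spec(\bfM_C)$, a scheme of finite type over $\bfM_C$. For any algebraically closed extension $L$ of $\bfM_C$, the set $T(L)$ is the fiber of $f_{\cc}$ over the image of $s_0$ in $S(L)$, which by the representative property is a singleton. Hence $T$ has exactly one geometric point, forcing $T = \Spec(R)$ with $R$ a local Artinian $\bfM_C$-algebra whose residue field $L_0$ is purely inseparable over $\bfM_C$: any nontrivial product decomposition of $R$, or any nontrivial separable part of $L_0/\bfM_C$, would supply extra geometric points.

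When $k$ is perfect and $K = \kbar$, the field $\bfM_C$ is finitely generated over $k$ and embeds into $\kbar$, so it is a finite separable extension of $k$ and therefore itself perfect; hence $L_0 = \bfM_C$. The quotient $R \twoheadrightarrow \bfM_C$ then provides a canonical point $s \in \sS(\bfM_C)$ with $f_{\cc}(s) = s_0$, and the pullback $\cc \times_{\sS, s} \Spec(\bfM_C)$ is a curve over $\bfM_C$ whose base change to $K$ represents $[C]$. This simultaneously proves the descent of $C$ to $\bfM_C$ and the membership in $\sS(\bfM_C)$.

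The main subtlety, and where I expect the argument to need the most care, is the general case without the perfectness hypothesis: there the residue field $L_0$ is only guaranteed to be purely inseparable over $\bfM_C$, so the construction above initially yields only descent to a finite purely inseparable extension of $\bfM_C$. Bridging the gap to genuine descent over $\bfM_C$ itself requires an additional input, most naturally a Galois-theoretic characterisation of $\bfM_C$ as the fixed field of the stabiliser of $[C]$ under $\Aut(K/k)$, which explains why the proposition separates the general descent statement from the stronger refinement in the perfect case.
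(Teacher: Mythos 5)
Your argument for the case where $\bfM_C$ is perfect (in particular for the final assertion, with $k$ perfect and $K=\kbar$, where $\bfM_C$ is a finite extension of $k$) is correct, and it takes a genuinely different route from the paper. The paper works with the set $\sS(K)$ directly: it reduces to $k=\bfM_C$ with $K/k$ Galois, notes that the unique preimage $x\in\sS(K)$ of $[C]$ satisfies $f_{\cc}(x^{\sigma})=f_{\cc}(x)$ for every $\sigma\in\Gal(K/k)$, deduces $x=x^{\sigma}$ from the uniqueness of the representative, and concludes $x\in\sS(k)$. Your scheme-theoretic version --- showing that the fibre $T=\sS\times_{S,s_0}\Spec(\bfM_C)$ is the spectrum of a local Artinian ring whose residue field $L_0$ is purely inseparable over $\bfM_C$ --- packages the same uniqueness input differently, and has the merit of isolating exactly where inseparability can intervene while avoiding any mention of a Galois group once $\bfM_C$ is known to be perfect.

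The gap you flag at the end is, however, a genuine one for the first assertion, which is stated for an arbitrary algebraically closed extension $K$ of an arbitrary $k$ of the allowed characteristic: your construction only produces a model of $C$ over the purely inseparable extension $L_0\supset\bfM_C$, and when $K/k$ is transcendental the field $\bfM_C$ is the residue field at a non-closed point of $\Mm_g$, so it need not be perfect even if $k$ is. Remark~\ref{rem:EGA} makes precisely this point: $f_{\cc}$ may fail to be bijective on points over non-algebraically-closed fields (think of a purely inseparable morphism $\sS\to S$), so $L_0=\bfM_C$ cannot be forced by the representative property alone. The paper closes this gap not by the Galois-theoretic characterisation of $\bfM_C$ that you suggest, but by invoking~\cite[Th.1.6.9]{huggins-thesis}, which reduces descent over a general (transcendental or inseparable) extension $K/k$ to the Galois case already treated; you would need to supply this reduction, or an equivalent statement handling purely inseparable extensions, to complete the proof of the first assertion.
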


\begin{proof}
  First we consider the case where $k = \bfM_C$ and $K$ is a Galois extension
  of $k$. Let $x \in \sS (K)$ be the preimage of $[C]$ under $f_{\cc}$. For
  every $\sigma \in \Gal (K / k)$ it makes sense to consider $x^{\sigma} \in
  \sS(K)$, since the family $\cc$ is defined over $k$. Now since $f_{\cc}$ is
  defined over $k$, we get $f_{\cc}(x)=f_{\cc}(x^{\sigma})=s$. By uniqueness of
  the representative in the family, we get $x=x^{\sigma}$. Since $\sigma$ was
  arbitrary and $K / k$ is Galois, we therefore have $x \in \sS(k)$, which
  gives a model for $C$ over $k$ by taking the corresponding fiber for the
  family $\cc \to \sS$. This already proves the final statement of the
  proposition.

  Since the notion of being representative is stable under changing the base
  field $k$, the argument in the Galois case gives us enough leverage to treat
  the general case (where $K / k$ is possibly transcendental or inseparable) by
  appealing to~\cite[Th.1.6.9]{huggins-thesis}.
\end{proof}

Conversely, we have the following result. A construction similar to it will be
used in the proof of Theorem~\ref{thm:smallstrata}.

\begin{proposition}\label{prop:iter}
  Let $S$ be a stratum defined over a field $k$. Suppose that for every finite
  Galois extension $F \supset E$ of field extensions of $k$, the field of
  moduli of the curve corresponding to a point in $S(E)$ equals $E$.  Then
  there exists a representative family $\cc_U \to U$ over a dense open subset
  of $S$. If $k$ is perfect, this family extends to a possibly disconnected
  representative family $\cc \to \sS$ for the stratum $S$.
\end{proposition}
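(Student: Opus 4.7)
The plan is to construct $\sS$ as a finite disjoint union of locally closed subschemes of $S$, each equipped with its own representative family obtained by descending the curve at the generic point (using the hypothesis) and then spreading out. Part one of the proposition will follow from carrying out this construction once, starting from the generic point(s) of $S$; part two then follows by Noetherian induction on the complementary closed locus at each stage.

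For part one, I may work component by component and assume $S$ is irreducible with generic point $\eta$ and residue field $E = k(\eta)$. Viewing $\eta$ as an $E$-point of $S$ and applying the hypothesis to a finite Galois extension $F/E$ over which the corresponding curve over $\bar E$ admits a model, Galois descent produces a model $C_\eta/E$; this is essentially the converse direction to Proposition~\ref{prop:desrep}. Since properness, smoothness and the genus condition on fibers are open on the base, clearing denominators and shrinking yields an open neighborhood $U\subset S$ of $\eta$ together with a smooth proper family $\cc_U \to U$ of curves of genus $g$ with generic fiber $C_\eta$. By construction the associated moduli morphism $f_{\cc_U}\colon U \to S$ sends $\eta$ to $\eta$, and by the density of $\eta$ in $U$ together with the separatedness of $S$ this morphism is the natural inclusion $U \hookrightarrow S$. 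Regarding $U$ as a subvariety of $\Mm_g$ in its own right, this identifies $\cc_U \to U$ as a representative family over $U$.

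For part two, with $k$ now assumed perfect, I iterate by Noetherian induction on $Z := S \setminus U$. The $k$-subvariety $Z$ is a proper closed subset of $S$ of strictly smaller dimension, and perfectness of $k$ permits its decomposition into finitely many $k$-defined locally closed pieces (the irreducible components of $Z_{\mathrm{red}}$, after grouping Galois orbits). The generic point of each such piece is an $E$-point of $S$ for some finitely generated $E \supset k$ and corresponds to a curve in the stratum $S$, so the hypothesis applies and the construction of part one produces a representative family over a dense open inside that piece. The Noetherian condition ensures termination, leaving a finite stratification $S = \bigsqcup_\alpha U_\alpha$ by $k$-locally closed subvarieties, each carrying a representative family $\cc_{U_\alpha} \to U_\alpha$. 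Setting $\sS := \bigsqcup_\alpha U_\alpha$ and $\cc := \bigsqcup_\alpha \cc_{U_\alpha}$ gives a (possibly disconnected) representative family for $S$: the $U_\alpha$ partition $S(K)$ for every algebraically closed extension $K$ of $k$, so the induced moduli map is bijective on $K$-points.

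The main obstacle is the descent step at the generic point. Extracting a genuine $E$-model $C_\eta$ from the equality ``field of moduli~$= E$'' is a nontrivial Galois-descent argument --- essentially the converse of Proposition~\ref{prop:desrep} --- and the ``$F \supset E$ Galois'' phrasing of the hypothesis is precisely what enables it. Once descent at the generic point is in hand, the remaining geometric steps (spreading out, Noetherian induction) are routine, and the perfectness of $k$ enters only to keep the pieces of the stratification defined over $k$.
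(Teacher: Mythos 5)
Your argument is essentially the paper's: descend the generic fibre to $E=k(\eta)$ using the hypothesis, spread out to a dense open $U\subset S$, and treat the complement by Noetherian induction on Galois-stable unions of irreducible components. The one point where you diverge is the input to the descent step: you merely posit a finite Galois extension $F/E$ over which the curve admits a model, whereas the paper produces it concretely as $k(\nu)$ for $\nu$ a generic point of $\pi_{\ell}^{-1}(\eta)$ in the level-$\ell$ cover $\Mm_g^{(\ell)}\to\Mm_g$, so that the universal family supplies the model and the quotient-by-$\Sp_{2g}(\FF_\ell)$ structure supplies the Galois property for free --- a detail worth keeping, since $k(\eta)$ need not be perfect and one must justify that a model exists over a finite \emph{separable} extension before invoking a Galois closure.
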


\begin{proof}
  Let $\eta$ be the generic point of $S$ and again let $\pi_{\ell} :
  \Mm_g^{(\ell)} \to \Mm_g$ be the forgetful map obtained by adding level
  structure at a prime $\ell>2$ different from $p$. Note that as a quotient by
  a finite group, $\pi_{\ell}$ is a finite Galois cover. Let $\nu$ be a generic
  point in the preimage of $\eta$ by $\pi_{\ell}$ and $\cc \to \nu$ be the
  universal family defined over $k(\nu)$. By definition the field of moduli
  $\bfM_{\cc}$ is equal to $k(\nu)$ and as $k(\nu)$ is a field of definition
  there exists a family $\cc_0 \to k(\nu)$ geometrically isomorphic to $\cc$.
  Since $k(\nu) \supset k(\eta)$ is a Galois extension, we can argue as in the
  proof of Proposition~\ref{prop:desrep} to descend to $k(\eta)$, and hence by
  a spreading-out argument we can conclude that $\cc_0$ is a representative
  family on a dense open subset $U$ of $S$. Proceeding by induction over the
  (finite) union of the Galois conjugates of the finitely many irreducible
  components of the complement of $U$, which is again defined over $k$, one
  obtains the second part of the proposition.
\end{proof}

Whereas the universal family $\cc \to \Mm_g^{(\ell)}$ is sometimes easy to
construct, it seems hard to work out $\cc_0$ directly by explicit Galois
descent; the Galois group of the covering $\Mm_g^{(\ell)} \to \Mm_g$ is
$\Sp_{2g}(\FF_{\ell})$, which is a group of large cardinality $\ell^{g^2}
\prod_{i=1}^g (\ell^{2i}-1)$ whose quotient by its center is simple. Moreover,
for enumeration purposes, it is necessary for the scheme $\sS$ to be as simple
as possible.  Typically one would wish for it to be rational, as fortunately
turns out always to be the case for plane quartics. On the other hand, for
moduli spaces of \emph{general type} that admit no rational curves, such as
$\Mm_g$ with $g> 23$, there does not even exist a rational family of curves
with a single parameter~\cite{harris-mumford}.

\section{Families of smooth plane quartics}
\label{sec:famquart}

\subsection{Review : automorphism groups}
\label{subsec:review}

Let $C$ be a smooth plane quartic over an algebraically closed field $K$ of
characteristic $p \geq 0$. Then since $C$ coincides up to a choice of basis
with its canonical embedding, the automorphism $\Aut (C)$ can be considered as
a conjugacy class of subgroups $\PGL_3 (K)$ (and in fact of $\GL_3 (K)$) by
using the action on its non-zero differentials.

The classification of the possible automorphism groups of $C$ as subgroup of
$\PGL_3(K)$, as well as the construction of some geometrically complete
families, can be found in several articles, such
as~\cite[2.88]{henn},~\cite[p.62]{vermeulen},~\cite{magaard},~\cite{bars}
and~\cite{dolgacag} (in chronological order), in which it is often assumed that
$p = 0$.  We have verified these results independently, essentially by checking
which finite subgroups of $\PGL_3 (K)$ (as classified
in~\cite[Lem.2.3.7]{huggins-thesis}) can occur for plane quartics. It turns out
that the classification in characteristic $0$ extends to algebraically closed
fields $K$ of prime characteristic $p \geq 5$. In the following theorem, we do
not indicate the open non-degeneracy conditions on the affine parameters, since
we shall not have need of them.

\begin{theorem}\label{thm:aut}
  Let $K$ be an algebraically closed field whose characteristic $p$ satisfies
  $p = 0$ or $p \geq 5$. Let $C$ be a genus $3$ non-hyperelliptic curve over
  $K$. The following are the possible automorphism groups of $C$, along with
  geometrically surjective families for the corresponding strata:
  \begin{enumerate}
  \item $ \{1\}$, with family $q_4(x,y,z)=0,$ where $q_4$ is a homogeneous
    polynomial of degree $4$;\vspace*{5pt}
  \item $ \CG_2$, with family $x^4 + x^2 q_2(y,z) + q_4(y,z) = 0$, where $q_2$
    and $q_4$ are homogeneous polynomials in $y$ and $z$ of degree $2$ and
    $4$;\vspace*{5pt}
  \item $ \DG_4$, with family $x^4 + y^4 + z^4 + r x^2 y^2 + s y^2 z^2 + t z^2
    x^2 = 0$;\vspace*{5pt}
  \item $\CG_3$, with family $x^3 z + y (y - z) (y - r z) (y - s z)=0$;
  \item $ \DG_8$, with family $x^4 + y^4 + z^4 + r x^2 y z + s y^2 z^2 = 0$;
  \item $\SG_3$, with family $x (y^3 + z^3) + y^2 z^2 + r x^2 y z + s x^4 =
    0$;\vspace*{5pt}
  \item $ \CG_6$, with family $x^3 z + y^4 + r y^2 z^2 + z^4 = 0$;
  \item $ \Gg_{16}$, with family $x^4 + y^4 + z^4 + r y^2 z^2 = 0$;
  \item $ \SG_4$, with family $x^4 + y^4 + z^4 + r (x^2 y^2 + y^2 z^2 + z^2
    x^2) = 0$;\vspace*{5pt}
  \item $ \CG_9$, represented by the quartic $x^3 y + y^3 z + z^4 = 0$;
  \item $ \Gg_{48}$, represented by the quartic $x^4 + (y^3 - z^3) z = 0$;
  \item $ \Gg_{96}$, represented by the Fermat quartic $x^4 + y^4 + z^4 = 0$;
  \item (if $p \neq 7$) $ \Gg_{168}$, represented by the Klein quartic $x^3 y +
    y^3 z + z^3 x = 0$.
  \end{enumerate}
\end{theorem}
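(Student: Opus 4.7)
The first step is to reduce the problem to the classification of certain finite subgroups of $\PGL_3(K)$. Since $C$ has genus $3$ and is non-hyperelliptic, its canonical map embeds $C$ as a smooth plane quartic in $\PP^2_K$, uniquely up to the action of $\PGL_3(K)$. The automorphism group $\Aut(C)$ acts faithfully on $H^0(C,\Omega^1_C) \cong K^3$, and the projectivized action gives an embedding $\Aut(C) \hookrightarrow \PGL_3(K)$ whose image preserves the quartic defining $C$. So it suffices to enumerate, up to $\PGL_3(K)$-conjugacy, the finite subgroups $G \subset \PGL_3(K)$ that arise this way and to describe, for each such $G$, the locus of $G$-invariant smooth quartics.

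For the enumeration I would start from the classification of finite subgroups of $\PGL_3(K)$ recalled in~\cite[Lem.2.3.7]{huggins-thesis}. This classification is stated in characteristic zero, but because the possible groups have order dividing $|\Gg_{168}| \cdot (\text{few small factors})$, it transports to characteristic $p \geq 5$ as long as $p$ does not divide the group order—this is precisely the reason for the hypothesis $p \neq 7$ in case (13), as $7 \mid 168$. For the remaining primes, the Haboush/Maschke-style averaging argument used to produce normal forms of the generators goes through, since the relevant orders are invertible in $K$.

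Having listed the candidate groups $G$, the heart of the argument is, for each $G$ in the list, to (i) write down explicit matrix generators of $G \subset \GL_3(K)$, (ii) compute the linear subspace $V(G) \subset K[x,y,z]_4$ of $G$-invariant quartic forms by elementary linear algebra, (iii) check that a generic element of $V(G)$ defines a smooth curve, and (iv) determine the full automorphism group of this generic member. Step (iv) is what separates the thirteen strata: for each $G$ one must verify that the generic $G$-invariant smooth quartic has $\Aut(C) = G$ and not some strictly larger group, which one checks by comparing $V(G)$ with $V(G')$ for each $G \subsetneq G'$ on the list and noting that the containment of invariant spaces is strict (i.e., an extra relation must be imposed to enforce the bigger symmetry). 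The families listed in items (1)--(13) are then, by construction, the generic $G$-invariant quartics written in a standard set of coordinates.

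Finally, to see that each family is \emph{geometrically surjective} for its stratum: if $C'$ is any smooth plane quartic over $K$ whose automorphism group is $\PGL_3(K)$-conjugate to $G$, then there is an element of $\PGL_3(K)$ conjugating $\Aut(C')$ onto the fixed copy $G$ used above, and this element transports the defining equation of $C'$ into $V(G)$. Hence $C'$ appears in the family up to $K$-isomorphism, which is exactly what is needed. The main obstacles in carrying this out rigorously are (a) controlling the characteristic-$p$ version of Huggins' list, i.e.\ ruling out exotic finite subgroups of $\PGL_3$ that could appear when $p \geq 5$, and (b) the bookkeeping in step (iv) to confirm that for each $G$ the \emph{full} automorphism group of the generic quartic is no bigger than $G$; any degeneration in small characteristic (e.g.\ the invariant pencil becoming singular, or an unexpected extra symmetry appearing) has to be excluded case by case, and this is ultimately the content of the authors' independent verification.
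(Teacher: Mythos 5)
Your plan follows essentially the same route as the paper: the authors likewise reduce via the canonical embedding to the classification of finite subgroups of $\PGL_3(K)$ from Huggins' thesis, check case by case which of these groups can preserve a smooth quartic, obtain each family as the space of invariant quartic forms for a fixed standard copy of the group (recorded with its normalizer in Theorem~\ref{thm:norm}), and get geometric surjectivity by conjugating the automorphism group of an arbitrary curve onto that standard copy. The paper gives no more detail than this—it defers to the literature and to an independent case-by-case verification—so the two obstacles you flag, namely transporting the subgroup classification to characteristic $p \geq 5$ (whence the exclusion of $p = 7$ for $\Gg_{168}$) and confirming that the generic member of each family has no extra automorphisms, are precisely the parts the authors also leave to that unpublished verification.
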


The families in Theorem~\ref{thm:aut} are geometrically surjective. Moreover,
they are irreducible and quasifinite (as we will see in the proof of
Theorem~\ref{thm:smallstrata}) for all groups except the trivial group and
$\CG_2$.
The embeddings of the automorphism group of these curves into $\PGL_3 (K)$ can
be found in Theorem~\ref{thm:norm} in Appendix~\ref{sec:gener-norm}. Because of
the irreducibility properties mentioned in the previous paragraph, each of the
corresponding subvarieties serendipitously describes an actual stratum in the
moduli space $\Mm_3^{\textrm{nh}} \subset \Mm_3$ of genus $3$ non-hyperelliptic
curves as defined in Section~\ref{sec:famdefs} (see Remark~\ref{rem:conj}
below). From the descriptions in~\ref{thm:norm}, one derives the inclusions
between the strata indicated in Figure~\ref{fig:strata}, as also obtained
in~\cite[p.65]{vermeulen}.

\begin{figure}[htbp]
  \centering
  \resizebox{0.6\textwidth}{!}{
    $\xymatrix@R-=10pt{
      \dim 6 & & & & \ar@{-}[dddll] \ar@{-}[d] \{1\}  &  & \\
      \dim 4 & & & & \CG_2 \ar@{-}[d] \ar@{-}[dddll] \ar@{-}[ddrr] & & \\
      \dim 3 & & & & \DG_4 \ar@{-}[d] & & \\
      \dim 2 & & \CG_3 \ar@{-}[ddl] \ar@{-}[d] & & \DG_8 \ar@{-}[dl]
      \ar@{-}[dr]&
      & \SG_3 \ar@{-}[dl] \\
      \dim 1 & & \CG_6 \ar@{-}[d] & \Gg_{16} \ar@{-}[dl] \ar@{-}[dr]&
      & \SG_4 \ar@{-}[dl] \ar@{-}[dr] \\
      \dim 0 & \CG_9 & \Gg_{48} & & \Gg_{96} & & \Gg_{168} \\
    }$
  }
  \caption{Automorphism groups}
  \label{fig:strata}
\end{figure}
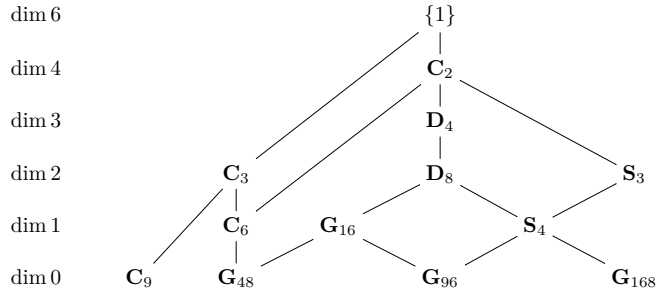

\begin{remark}\label{rem:conj}
  As promised at the beginning of Section~\ref{sec:famdefs}, we now indicate
  two different possible rigidifications of an action of a finite group on
  plane quartics. Consider the group $\CG_3$. Up to conjugation, this group can
  be embedded into $\PGL_3 (K)$ in exactly two ways; as a diagonal matrix with
  entries proportional to $(\zeta_3 , 1 , 1)$ or $(\zeta_3^2 , \zeta_3 , 1)$.
  This gives rise to two rigidifications in the sense of L\o{}nsted.

  While for plane curves of sufficiently high degree, this indeed leads to two
  families with generic automorphism group $\CG_3$, the plane quartics
  admitting the latter rigidification always admit an extra involution, so that
  the full automorphism group contains $\SG_3$. It is this fortunate phenomenon
  that still makes a naive stratification by automorphism groups possible for
  plane quartics. For the same reason, the stratum for the group $\SG_3$ is not
  included in that for $\CG_3$, as is claimed incorrectly in~\cite{bars}.
\end{remark}

\subsection{Construction of representative families}
\label{subsec:repfam}

We now describe how to apply Galois descent to extensions of function fields to
determine representative families for the strata in Theorem~\ref{thm:aut} with
$|G| > 2$. By Proposition~\ref{prop:desrep}, this shows that the descent
obstruction always vanishes for these strata.

Our constructions lead to families that parametrize the strata much more
efficiently; for the case $\DG_4$, the family in Theorem~\ref{thm:aut} contains
as much as $24$ distinct fibers isomorphic with a given curve.  Moreover, by
Proposition~\ref{prop:desrep}, in order to write down a complete list of the
$\kbar$-isomorphism classes of smooth plane quartics defined over a perfect
field $k$ we need only consider the $k$-rational fibers of the new families.

As in Theorem~\ref{thm:aut}, we do not specify the condition on the parameters
that avoid degenerations (\ie singular curves or a larger automorphism group),
but such degenerations will be taken into account in our enumeration strategy
in Section~\ref{sec:impl-exper}.

\begin{theorem}\label{thm:smallstrata}
  Let $k$ be a field whose characteristic $p$ satisfies $p = 0$ or $p \geq 7$.
  The following are representative families for the strata of smooth plane
  quartics with $| G | > 2$.
  \begin{itemize}
  \item $G \simeq \DG_4$:
    \begin{dmath*}[compact,style={\footnotesize},spread={-3pt}]
      (a + 3) x^4 + (4 a^2 - 8 b + 4 a) x^3 y + (12 c + 4 b) x^3 z + (6 a^3 - 18
      a b + 18 c + 2 a^2) x^2 y^2 + (12 a c + 4 a b) x^2 y z + (6 b c + 2 b^2)
      x^2 z^2 + (4 a^4 - 16 a^2 b + 8 b^2 + 16 a c + 2 a b - 6 c) x y^3 + (12
      a^2 c - 24 b c + 2 a^2 b - 4 b^2 + 6 a c) x y^2 z + (36 c^2 + 2 a b^2 - 4
      a^2 c + 6 b c) x y z^2 + (4 b^2 c - 8 a c^2 + 2 a b c - 6 c^2) x z^3 +
      (a^5 - 5 a^3 b + 5 a b^2 + 5 a^2 c - 5 b c + b^2 - 2 a c) y^4 + (4 a^3 c -
      12 a b c + 12 c^2 + 4 a^2 c - 8 b c) y^3 z + (6 a c^2 + a^2 b^2 - 2 b^3 -
      2 a^3 c + 4 a b c + 9 c^2) y^2 z^2 + (4 b c^2 + 4 b^2 c - 8 a c^2) y z^3 +
      (b^3 c - 3 a b c^2 + 3 c^3 + a^2 c^2 - 2 b c^2) z^4  = 0
    \end{dmath*}
  along with
    \begin{dmath*}[compact,style={\footnotesize},spread={-3pt}]
      x^4 + 2 x^2 y^2 + 2 a x^2 y z + (a^2 - 2 b) x^2 z^2 + a y^4 + 4 (a^2 - 2
      b) y^3 z + 6 (a^3 - 3 a b) y^2 z^2 + 4 (a^4 - 4 a^2 b + 2 b^2) y z^3 +
      (a^5 - 5 a^3 b + 5 a b^2) z^4 = 0  .
    \end{dmath*}
    \vspace*{5pt}
  \item $G \simeq \CG_3$: $x^3 z + y^4 + a y^2 z^2 + a y z^3 + b z^4 = 0$
    along with $x^3 z + y^4 + a y z^3 + a z^4 = 0$;
  \item $G \simeq \DG_8$: $x^4 + x^2 y z + y^4 + a y^2 z^2 + b z^4 = 0$;
  \item $G \simeq \SG_3$: $x^3 z + y^3 z + x^2 y^2 + a x y z^2 + b z^4 = 0$;
    \vspace*{5pt}
  \item $G \simeq \CG_6$: $x^3 z + a y^4 + a y^2 z^2 + z^4 = 0$;
  \item $G \simeq \Gg_{16}$: $x^4 + (y^3 + a y z^2 + a z^3) z = 0$;
  \item $G \simeq \SG_4$: $x^4 + y^4 + z^4 + a (x^2 y^2 + y^2 z^2 + z^2 x^2) =
    0$;
    \vspace*{5pt}
  \item $G \simeq \CG_9$: $x^3 y + y^3 z + z^4 = 0$;
  \item $G \simeq \Gg_{48}$: $x^4 + (y^3 - z^3) z = 0$;
  \item $G \simeq \Gg_{96}$: $x^4 + y^4 + z^4 = 0$;
  \item (if $p \neq 7$) $G \simeq \Gg_{168}$: $x^3 y + y^3 z + z^3 x = 0$.
  \end{itemize}
\end{theorem}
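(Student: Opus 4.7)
The plan is to handle each stratum in Theorem~\ref{thm:aut} with $|G|>2$ separately, using the geometrically surjective family provided there as a starting point. For the zero-dimensional strata ($\CG_9$, $\Gg_{48}$, $\Gg_{96}$, and $\Gg_{168}$ when $p\ne 7$), the stated single quartic is already a representative family once one verifies that its geometric automorphism group is as claimed, which is a direct matrix computation using Theorem~\ref{thm:norm} from the appendix. For the remaining strata, the key observation is: two members of a family of the shape given in Theorem~\ref{thm:aut} are $\kbar$-isomorphic as curves if and only if the coordinate change relating them lies in the projective normalizer $N_{\PGL_3}(G)$, and hence if and only if their parameters lie in the same orbit of $N_{\PGL_3}(G)/G$ acting on the parameter space. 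So I would compute this finite group and its linear action on the parameters explicitly, starting again from Theorem~\ref{thm:norm}.

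Once the action of $N_{\PGL_3}(G)/G$ on the parameter space $T_G$ of the family from Theorem~\ref{thm:aut} has been made explicit, the representative family is obtained by (i) constructing a rational quotient $T_G/(N_{\PGL_3}(G)/G)$, necessarily rational in all our cases, and (ii) lifting the universal curve to this quotient by Galois descent. Step (ii) is exactly the situation covered by Proposition~\ref{prop:iter}: the relevant quotient is a finite Galois cover of the stratum, and on each stratum of plane quartics the field of moduli equals a field of definition (the descent obstruction for the canonically-embedded curve is controlled by $H^1$ of the automorphism group acting on the center of $\GL_3$, which one checks case by case). Applying Proposition~\ref{prop:iter} then gives a representative family over a dense open of the stratum, which is extended to the whole stratum by Noetherian induction on the complement, producing the possibly disconnected families appearing for $\CG_3$ and $\DG_4$.

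In practice I would proceed in order of increasing parameter dimension. For the one-parameter strata $\SG_4$, $\Gg_{16}$ and $\CG_6$, one checks that $N_{\PGL_3}(G)/G$ acts trivially on the parameter $r$ of the family in Theorem~\ref{thm:aut}, so the family already descends (up to a cosmetic substitution that gives the shape printed in the statement). For $\DG_8$, $\SG_3$ and (generically) $\CG_3$, the normalizer quotient is small --- typically $\CG_2$ swapping two parameters or rescaling one against the other --- so explicit invariants $(a,b)$ can be written down and the curve equation can be renormalized in these invariants by a coordinate change on $(x,y,z)$, with the extra component in the $\CG_3$ case arising from a locus where one of the invariants vanishes and the coordinate change becomes indeterminate. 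For each family I would then verify directly that the map from the new parameter space to the stratum is bijective on $K$-points by reconstructing the parameters from the curve (e.g.\ via canonically defined configurations of fixed points of $G$ on $\PP^2$).

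The main obstacle is clearly the $\DG_4$ case. Here the parameter space is three-dimensional, and $N_{\PGL_3}(\DG_4)/\DG_4$ acts nontrivially, with the orbit structure of a generic fiber of size $|N/G|=24$ --- this accounts for the statement's remark about $24$ duplicate fibers. Rather than trying to guess the right substitution, I would set up the Galois descent formally: realize the quotient by computing a primitive element $a,b,c$ of the fixed field of $N/G$ in $k(r,s,t)$, then rewrite a generic diagonal-form quartic $x^4+y^4+z^4+rx^2y^2+sy^2z^2+tz^2x^2$ in the new variables by applying a $(1,\sqrt{\alpha},\sqrt{\beta})$-twist to $(x,y,z)$ that cancels the square roots introduced in expressing $r,s,t$ rationally in $a,b,c$. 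The resulting formula is necessarily bulky, and the second printed equation arises exactly as the additional component produced by Proposition~\ref{prop:iter}'s Noetherian-induction step on the locus where the first descent is not defined. A posteriori I would verify the construction computationally by checking bijectivity on $\overline{\FF_p}$-points for a few primes, which is how one would catch sign errors in the descent.
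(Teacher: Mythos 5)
Your overall strategy coincides with the paper's: reduce isomorphisms between fibres of the Theorem~\ref{thm:aut} families to the normalizer of $G$ in $\PGL_3(K)$, identify the finite quotient $Q=N'/G$ acting on the parameters, and eliminate it by Galois (Weil) descent on function fields, treating the zero-dimensional strata by direct verification. Two caveats before the main issue. First, it is not $N_{\PGL_3}(G)/G$ that acts on the parameter space but only the subgroup $N'$ of $N$ preserving the family (for $\CG_6$ or $\DG_4$ the normalizer contains a full diagonal torus); correspondingly the generic $\DG_4$ fibre count is $|N'/G|=24$, not $|N/G|$. Second, invoking Proposition~\ref{prop:iter} plus Noetherian induction only yields abstract existence over a dense open; the content of the theorem is the explicit equations, so the Weil cocycles and coboundaries must be computed by hand in each case (which you do propose for $\DG_4$, and your description of that case --- symmetric functions $a,b,c$ of $(r,s,t)$, an explicit coboundary, and a separate lower-dimensional component --- matches the paper, although the paper obtains the two components by moving the parameters onto $x^4,y^4,z^4$ and using the number of nonzero quadratic-factor coefficients as an isomorphism invariant, not by an indeterminacy locus of the descent map).

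The genuine gap is your claim that for the one-parameter strata $\SG_4$, $\Gg_{16}$ and $\CG_6$ the quotient $N'/G$ acts trivially on the parameter $r$, so that the family ``already descends up to a cosmetic substitution.'' This is false for $\CG_6$ and $\Gg_{16}$, and these are exactly the strata where the subtle point of the proof lies. For $\CG_6$ the diagonal part of the normalizer acts by $r\mapsto -r$, so $Q\cong\CG_2$ and the family of Theorem~\ref{thm:aut} has degree $2$ onto the stratum; moreover the natural isomorphism $(x,y,z)\mapsto(x,iy,z)$ between the fibres at $r$ and $-r$ fails the Weil cocycle condition over $K(r)\supset K(r^2)$, and one must pass to the degree-$4$ extension $K(\rho)$ with $\rho^2=r$ to trivialize the cocycle. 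The coboundary $(x,y,z)\mapsto(x,\rho y,z)$ is what produces the coefficient pattern $a\,y^4+a\,y^2z^2$ with $a=r^2$ in the stated family; this is not a cosmetic substitution of $x^3z+y^4+ry^2z^2+z^4$. Likewise the stated $\Gg_{16}$ family $x^4+(y^3+ayz^2+az^3)z=0$ has an entirely different shape from the Theorem~\ref{thm:aut} family $x^4+y^4+z^4+ry^2z^2=0$ and is obtained by the same eigenspace/normal-form argument as the $\CG_3$ case. Only for $\SG_4$ (and $\SG_3$) is $Q$ actually trivial. As written, your ``check'' for these strata would fail, and the nontrivial cocycle computation it omits is precisely the proof for them.
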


We do not give the full proof of this theorem, but content ourselves with some
families that illustrate the most important ideas therein. Let $K$ be an
algebraically closed extension of $k$. The key fact that we use, which can be
observed from the description in Theorem~\ref{thm:norm}, is that the fibers of
the families in Theorem~\ref{thm:aut} all have the same automorphism group $G$
as a subgroup of $\PGL_3 (K)$. Except for the zero-dimensional cases, which are
a one-off verification, one then proceeds as follows.

\begin{enumerate}
  \item The key fact above implies that any isomorphism between two curves in
    the family is necessarily induced by an element of the normalizer $N$ of
    $G$ in $\PGL_3 (K)$. So one considers the action of this group on the
    family given in Theorem~\ref{thm:aut}.
  \item One determines the subgroup $N'$ of $N$ that sends the family to itself
    again. The action of $N'$ factors through a faithful action of $Q = N' /
    G$. By explicit calculation, it turns out that $Q$ is finite for the
    families in Theorem~\ref{thm:aut} with $|G| > 2$. This shows in particular
    that these families are already quasifinite on these strata.
  \item One then takes the quotient by the finite action of $Q$, which is
    done one the level of function fields over $K$ by using Galois descent. By
    construction, the resulting family will be representative. For the general
    theory of Galois descent, we refer to~\cite{wei56}
    and~\cite[App.A]{varshavsky}.
\end{enumerate}

We now treat some representative cases to illustrate this procedure. In what
follows, we use the notation from Theorem~\ref{thm:norm} to denote elements and
subgroups of the normalizers involved.

\begin{proof}
  \textbf{The case $G \simeq \SG_3.$} Here $N = T(K) \widetilde{\SG}_3$
  contains the group of diagonal matrices $T(K)$. Transforming, one verifies
  that the subgroup $N' \subsetneq N$ equals $\widetilde{\SG_3}$; indeed, since
  $\widetilde{\SG}_3$ fixes the family pointwise, we can restrict to the
  elements $T(K)$. But then preserving the trivial proportionality of the
  coefficients in front of $x^3 z$, $y^3 z$, and $x^2 y^2$ forces such a
  diagonal matrix to be scalar. This implies the result; the group $Q$ is
  trivial in $\PGL_3 (K)$, so we need not adapt our old family since it is
  geometrically surjective and contains no geometrically isomorphic fibers. A
  similar argument works for the case $G \cong \SG_4$.  \smallskip

  \textbf{The case $G \simeq \CG_6$.} This time we have to consider the action
  of the group $D (K)$ on the family $x^3 z + y^4 + r y^2 z^2 + z^4 = 0$ from
  Theorem~\ref{thm:aut}.  After the action of a diagonal matrix with entries
  $\lambda, \mu, 1$, one obtains the curve $\lambda^3 x^3 z + \mu^4 y^4 + \mu^2
  r y^2 z^2 + z^4 = 0$. We see that we get a new curve in the family if
  $\lambda^3 = 1$ and $\mu^4 = 1$, in which case the new value for $r$ equals
  $\mu r$. But this equals $\pm r$ since $(\mu^2)^2 = 1$. The degree of the
  morphism to $\Mm_3$ induced by this family therefore equals $2$. This also
  follows from the fact that the subgroup $N'$ that we just described contains
  $G$ as a subgroup of index $2$, so that $Q \cong \CG_2$.

  We have a family over $L = K (r)$ whose fibers over $r$ and $-r$ are
  isomorphic, and we want to descend this family to $K(a)$, where $a = r^2$
  generates the invariant subfield under the automorphism $r \rightarrow -r$.
  This is a problem of Galois descent for the group $Q \cong \CG_2$ and the
  field extension $M \supset L$, with $M = K (r)$ and $L = K(a)$. The curve $C$
  over $M$ that we wish to descend to $L$ is given by $x^3 z + y^4 + r y^2 z^2
  + z^4 = 0$. Consider the conjugate curve $C^{\sigma} : x^3 z + y^4 - r y^2
  z^2 + z^4 = 0$ and the isomorphism $\varphi : C \to C^{\sigma}$ given by
  $(x,y,z) \to (x,i y,z)$. Then we do not have $\varphi^{\sigma} \varphi =
  \Id$. To trivialize the cocycle, we need a larger extension of our function
  field $L$.

  Take $M' \supset M$ to be $M' = M(\rho)$, with $\rho^2 = r$.  Let $\tau$ be a
  generator of the cyclic Galois group of order $4$ of the extension $M'
  \supset L$. Then $\tau$ restricts to $\sigma$ in the extension $M \supset L$,
  and for $M' \supset L$ one now indeed obtains a Weil cocycle determined by
  the isomorphism $C \mapsto C^{\tau} = C^{\sigma}$ sending $(x,y,z)$ to $(x,i
  y,z)$. The corresponding coboundary is given by $(x,y,z) \mapsto ( x , \rho y
  , z)$. Transforming, we end up with
  \begin{math}
    x^3 z + (\rho y)^4 + r (\rho y)^2 z^2 + z^4  = x^3 z + a y^4 + a y^2 z^2 + z^4
    = 0,
  \end{math}
  which is what we wanted to show. The case $G \cong \DG_8$ can be dealt with
  in a similar way.\smallskip

  \textbf{The case $G \simeq \DG_4$.} We start with the usual \emph{Ciani
  family} from Theorem~\ref{thm:aut}, given by
  \begin{math}
    x^4 + y^4 + z^4 + r x^2 y^2 + s y^2 z^2 + t z^2 x^2  = 0 .
  \end{math}
  Using the $\widetilde{\SG}_3$-elements from the normalizer $N = D(K)
  \widetilde {\SG}_3$ induces the corresponding permutation group on $(r,s,t)$.
  The diagonal matrices in $D(K)$ then remain, and they give rise to the
  transformations $(r,s,t) \mapsto (\pm r , \pm s , \pm t)$ with an even number
  of minus signs. This is slightly awkward, so we try to eliminate the latter
  transformations. This can be accomplished by moving the parameters in front
  of the factors $x^4$, $y^4$, $z^4$. So we instead split up $\sS$ into a
  disjoint union of two irreducible subvarieties by considering the family
  \begin{align*}
    r x^4 + s y^4 + t z^4 + x^2 y^2 + y^2 z^2 + z^2 x^2   = 0 ,
  \end{align*}
  and its lower-dimensional complement
  \begin{align*}
    r x^4 + s y^4 + z^4 + x^2 y^2 + y^2 z^2 = 0 .
  \end{align*}
  Here the trivial coefficient in front of $z^4$ is obtained by scaling $x,y,z$
  by an appropriate factor in the family $r x^4 + s y^4 + t z^4 + x^2 y^2 + y^2
  z^2 = 0$. Note that because of our description of the normalizer, the number
  of non-zero coefficients in front of the terms with quadratic factors depends
  only on the isomorphism class of the curve, and not on the given equation for
  it in the geometrically surjective Ciani family.  This implies that the two
  families above do not have isomorphic fibers. Moreover, the a priori
  remaining family $r x^4 + y^4 + z^4 + y^2 z^2 = 0$ has larger automorphism
  group, so we can discard it.

  We only consider the first family, which is the most difficult case. As in
  the previous example, after our modification the elements of $N' \cap D(K)$
  are in fact already in $G$. Therefore $Q = N' / G \subset D(K)
  \widetilde{\SG}_3$ is a quotient of the remaining factor $\widetilde{\SG}_3$,
  which clearly acts freely and is therefore isomorphic with $Q$.  We obtain
  the invariant subfield $L = K(a,b,c)$ of $M = K (r,s,t)$, with $a = r + s +
  t$, $b = r s + s t + t r$ and $c = r s t$ the usual elementary symmetric
  functions. The cocycle for this extension is given by sending a permutation
  of $(r,s,t)$ to its associated permutation matrix on $(x,y,z)$. A coboundary
  is given by the isomorphism
  \begin{math}
    (x,y,z)\mapsto (x+y+z, r\,x+s\,y+t\,z, st\,x+tr\,y+rs\,z)\,.
  \end{math}
  Note that this isomorphism is invertible as long as $r,s,t$ are distinct,
  which we may assume since otherwise the automorphism group of the curve would
  be larger. Transforming by this coboundary, we get our result.\smallskip

  \textbf{The case $G \simeq \CG_3$.} This case needs a slightly different
  argument. Consider the eigenspace decomposition of the space of quartic
  monomials in $x,y,z$ under the action of the diagonal generator
  $(\zeta_3,1,1)$ of $\CG_3$. The curves with this automorphism correspond to
  those quartic forms that are eigenforms for this automorphism, which is the
  case if and only if it is contained in one of the aforementioned eigenspaces.
  We only need consider the eigenspace spanned by the monomials $x^3 y$, $x^3
  z$, $y^4$, $y^3 z$, $y^2 z^2$, $y z^3$, $z^4$; indeed, the quartic forms in
  the other eigenspaces are all multiples of $x$ and hence give rise to
  reducible curves.

  Using a linear transformation, one eliminates the term with $x^3 y$, and a
  non-singularity argument shows that we can scale to the case
  \begin{math}
    x^3 z + y^4 + r y^3 z + s y^2 z^2 + t y z^3 + u z^4 = 0  .
  \end{math}
  We can set $r = 0$ by another linear transformation, which then reduces $N'$
  to $D (K)$. Depending on whether $s = 0$ or not, one can then scale by these
  scalar matrices to an equation as in the theorem, which one verifies to be
  unique by using the same methods as above. The case $G \simeq \Gg_{16}$ can
  be proved in a completely similar way.
\end{proof}

\begin{remark}\label{rem:nonfin2}
  As mentioned in Remark~\ref{rem:EGA}, these constructions give rise to
  isomorphisms $\sS \to S$ in all cases except $\DG_4$, and $\CG_3$. In these
  remaining cases, we have constructed a morphism $\sS \to S$ that is bijective
  on points but not an isomorphism. It is possible that no family $\cc \to S$
  inducing such an isomorphism exists; see~\cite{gorvivi} for results in this
  direction for hyperelliptic curves.
\end{remark}

\subsection{Remaining cases}
\label{sec:big}

We have seen in Proposition~\ref{prop:desrep} that if there exist a
representative family over $k$ over a given stratum, then the field of moduli
needs to be a field of definition for all the curves in this stratum.
In~\cite{artqui}, it is shown that there exist $\RR$-points in the stratum
$\CG_2$ for which the corresponding curve cannot be defined over $\RR$.  In
fact we suspect that this argument can be adapted to show that representative
families for this stratum fail to exist even if $k$ is a finite field. However,
we can still find arithmetically surjective families over finite fields.

\begin{proposition} \label{prop:c2}
  Let $C$ be a smooth plane quartic with automorphism group $\CG_2$ over a
  finite field $k$ of characteristic different from $2$. Let $\alpha$ be a
  non-square element in $k$. Then $C$ is $k$-isomorphic to a curve of one of
  the following forms:
  {\small
  \begingroup
  \addtolength{\jot}{-3pt}
  \begin{align*}
    & x^4+\epsilon x^2 y^2 + a y^4+\mu y^3 z+b y^2 z^2+c y z^3+d z^4 = 0 &
      \textrm{with} \; \epsilon=1 \; \textrm{or} \; \alpha \; \textrm{and}  \;
      \mu=0 \; \textrm{or} \; 1, \\
    & x^4+x^2 y z + a y^4+\epsilon y^3 z+b y^2 z^2+c y z^3+d z^4 = 0 &
    \textrm{with} \; \epsilon=0,1 \; \textrm{or} \; \alpha, \\
    & x^4+x^2 (y^2-\alpha z^2) + a y^4+b y^3 z+c y^2 z^2+d y z^3+e z^4 = 0 \,.
    &
  \end{align*}
  \endgroup
  }
\end{proposition}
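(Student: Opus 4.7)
The plan is to exploit that the unique non-trivial automorphism $\iota \in \Aut(C_{\bar k}) \cong \CG_2$ is $\Gal(\bar k / k)$-invariant, hence defined over $k$, to put $C$ in a first standard form and then exploit the residual symmetries. Since $\chr(k) \neq 2$, the action of $\iota$ on $H^0(C, \Omega^1)$ splits this space as $V_- \oplus V_+$ over $k$ with $\dim V_- = 1$ and $\dim V_+ = 2$. Choosing a generator $x$ of $V_-$ and a basis $(y,z)$ of $V_+$ gives canonical coordinates in which $\iota$ is $(x,y,z) \mapsto (-x,y,z)$, so the defining quartic is $\iota$-invariant and only even powers of $x$ appear, yielding an equation $\lambda x^4 + x^2 q_2(y,z) + q_4(y,z) = 0$ with $\lambda \in k^*$ and binary forms $q_2, q_4$ over $k$. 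Rescaling the equation makes $\lambda = 1$, and $q_2 \neq 0$ since otherwise $x \mapsto \zeta_4 x$ would give an extra automorphism over $\kbar$, contradicting $\Aut(C_{\bar k}) = \CG_2$.

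I would then classify $q_2$ up to the remaining freedom: $g \in \GL_2(k)$ on $(y,z)$, $x \mapsto cx$, and equation-scaling $\delta$, subject to $\delta = c^4$ in order to preserve the $x^4$-coefficient. Under this, the $x^2$-coefficient transforms as $q_2 \mapsto c^{-2}\, g^*q_2$, so the operations available on $q_2$ are $\GL_2(k)$ composed with scaling by squares in $k^*$. Over the finite field $k$ of characteristic different from $2$, this places nonzero binary quadratic forms into precisely three orbits, which give the three cases of the proposition: rank one with representative $\epsilon y^2$ where $\epsilon \in \{1, \alpha\}$ (since only $(k^*)^2$-scalings of the leading coefficient are available), rank two isotropic with representative $yz$, and rank two anisotropic with representative $y^2 - \alpha z^2$ (where the normalization of the leading coefficient uses surjectivity of the norm from $k(\sqrt{\alpha})^*$ to $k^*$ to realize any prescribed similitude factor of the standard anisotropic form).

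For each of these normal forms I would determine the residual group of pairs $(g, c)$, where $g$ is a similitude of $q_2$ of factor $c^2$, and track its action on $q_4$. In case one, $g$ ranges over lower-triangular matrices $(y,z) \mapsto (\lambda y, \mu y + \nu z)$ with $\lambda = \pm c$, under which the coefficient of $y^3 z$ scales by the arbitrary nonzero quantity $\pm \nu/c$, reducing it to $0$ or $1$. In case two, $g$ runs over diagonals $(y,z) \mapsto (\lambda y, \nu z)$ with $\lambda \nu = c^2$ together with the swap $(y,z) \mapsto (z,y)$, so the coefficient of $y^3 z$ scales by the square $(\lambda/c)^2$, yielding the three classes $\{0, 1, \alpha\}$. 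In case three the group of similitudes of $y^2 - \alpha z^2$ with square factor is too small to kill any single coefficient of $q_4$ while leaving the others unconstrained, so all five remaining coefficients appear as free parameters in the stated family.

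The main obstacle is the bookkeeping in this last step: for each orbit of $q_2$ one must correctly identify the residual group, compute its action on the coefficients of $q_4$, and verify both that the claimed normalizations can always be achieved and that the chosen families genuinely cover every $k$-isomorphism class. The finite-field hypothesis enters essentially in two places, namely the classification of binary quadratic forms into exactly three orbits, and the surjectivity of the norm from $k(\sqrt{\alpha})^*$ to $k^*$ needed for the anisotropic normalization.
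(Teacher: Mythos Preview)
Your proposal is correct and follows essentially the same approach as the paper: both diagonalize the $k$-rational involution to obtain the form $x^4 + x^2 q_2(y,z) + q_4(y,z) = 0$, then split into three cases according to the factorization type of $q_2$ over $k$ (double root, split, irreducible) and normalize within each case using the residual freedom. Your treatment is somewhat more systematic in its invocation of similitude groups and norm surjectivity for the anisotropic case, whereas the paper works with explicit coordinate changes, but the underlying arguments are identical.
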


\begin{proof}
  The involution on the quartic, being unique, is defined over $k$. Hence by
  choosing a basis in which this involution is a diagonal matrix, we can assume
  that it is given by $(x,y,z) \mapsto (-x,y,z)$. This shows that the family
  $x^4+ x^2 q_2(y,z) + q_4(y,z) = 0$ of Theorem~\ref{thm:aut} is arithmetically
  surjective. We have $q_2(y,z) \ne 0$ since otherwise more automorphisms would
  exist over $K$. We now distinguish cases depending on the factorization of
  $q_2$ over $k$.
  \begin{enumerate}
    \item If $q_2$ has a multiple root, then we may assume that $q_2(y,z)=r y^2$
      where $r$ equals $1$ or $\alpha$. Then either the coefficient $b$ of $y^3
      z$ in $q_4$ is $0$, in which case we are done, or we can normalize it to
      $1$ using the change of variable $z \mapsto z/b $.
    \item If $q_2$ splits over $k$, then we may assume that $q_2(y,z)=yz$. Then
      either the coefficient $b$ of $y^3 z$ in $q_4$ is $0$, in which case we
      are done, or we attempt to normalize it by a change of variables $y \mapsto
      \lambda y$ and $z \mapsto z/\lambda $. This transforms $b y^3 z$ into $b
      \lambda^2 y^3 z$. Hence we can assume $b$ equals $1$ or $\alpha$.
    \item If $q_2$ is irreducible over $k$, then we can normalize $q_2(y,z)$ as
      $y^2- \alpha z^2$ where $\alpha$  is a non-square in $k$. This gives us
      the final family with $5$ coefficients. \qedhere
  \end{enumerate}
\end{proof}

\begin{remark}
  The same proof shows the existence of a quasifinite family for the stratum in
  Proposition~\ref{prop:c2}, since over algebraically fields we can always
  reduce to the first or second case.
\end{remark}

We have seen in Section~\ref{sec:famdefs} that a universal family exists for
the stratum with trivial automorphism group.  Moreover, as $\Mm_3$ is rational
\cite{katsylo}, this family depends on $6$ rational parameters. However, no
representative (hence in this case universal) family seems to have been written
down so far.

Classically, when the characteristic $p$ is different from $2$ or $3$, there
are at least two ways to construct quasifinite families for the generic
stratum.  The first method fixes bitangents of the quartic and leads to the
so-called Riemann model; see \cite{grossharris,agmri,weber} for relations
between this construction, the moduli of $7$ points in the projective plane and
the moduli space $\Mm_3^{(2)}$. The other method uses flex points, as in
\cite[Prop.1]{shioda-quartic}.  In neither case can we get such models over the
base field $k$, since for a general quartic, neither its bitangents nor its
flex points are defined over $k$.
We therefore content ourselves with the following result which was kindly
provided to us by J. Bergstr\"om.

\begin{proposition}[Bergstr\"om]\label{prop:bergstrom}
  Let $C$ be a smooth plane quartic over a field $k$ admitting a rational point
  over a field of characteristic $\neq 2, 3$.  Then $C$ is isomorphic to a
  curve of one of the following forms: {\small
  \begingroup
  \addtolength{\jot}{-3pt}
  \begin{align*}
    m_1 x^4 + m_2 x^3 y + m_4 x^2 y^2 + m_6 x^2 z^2 + m_7 x y^3 + x y^2 z +
      m_{11} y^4 + m_{12} y^3 z + y^2 z^2 + y z^3 = 0,  \\
    m_1 x^4 + m_2 x^3 y + m_4 x^2 y^2 + m_6 x^2 z^2 + x y^3 + m_{11} y^4 +
      m_{12} y^3 z + y^2 z^2 + y z^3 = 0, \\
    m_1 x^4 + m_2 x^3 y + m_4 x^2 y^2 + m_6 x^2 z^2 + m_{11} y^4 + m_{12} y^3 z
      + y^2 z^2 + y z^3 = 0, \\
    m_1 x^4 + m_2 x^3 y + m_4 x^2 y^2 + m_6 x^2 z^2 + x y^3 + x y^2 z + m_{11}
      y^4 + m_{12} y^3 z + y z^3 = 0, \\
    m_1 x^4 + m_2 x^3 y + m_4 x^2 y^2 + m_6 x^2 z^2 + x y^2 z + m_{11} y^4 +
      m_{12} y^3 z + y z^3 = 0, \\
    x^4 + m_2 x^3 y + m_4 x^2 y^2 + m_6 x^2 z^2 + m_7 x y^3 + m_{11} y^4 +
      m_{12} y^3 z + y z^3 = 0, \\
    m_2 x^3 y + m_4 x^2 y^2 + m_6 x^2 z^2 + m_7 x y^3 + m_{11} y^4 + m_{12} y^3
      z + y z^3 = 0, \\
    x^3 z + m_4 x^2 y^2 + m_7 x y^3 + m_8 x y^2 z + x y z^2 + m_{11} y^4 +
      m_{12} y^3 z + m_{13} y^2 z^2 + y z^3 = 0, \\
    x^3 z + m_4 x^2 y^2 + m_7 x y^3 + m_8 x y^2 z + m_{11} y^4 + m_{12} y^3 z +
      m_{13} y^2 z^2 + y z^3 = 0, \\
    x^4 + m_4 x^2 y^2 + m_5 x^2 y z + m_7 x y^3 + m_8 x y^2 z + m_{11} y^4 +
      m_{12} y^3 z + y z^3 = 0.
  \end{align*}
  \endgroup
  }
\end{proposition}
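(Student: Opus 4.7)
The plan is to reduce to a normal form using the $\PGL_3(k)$-action together with the $k$-rational point of $C$. First, I place the rational point at $(0{:}0{:}1)$; the equation then loses its $z^4$ coefficient. Since $C$ is smooth there, its tangent line is a $k$-rational line through $(0{:}0{:}1)$. Using a linear change in $(x,y)$ alone (which fixes $(0{:}0{:}1)$), I move the tangent to $\{y=0\}$, which kills the coefficient of $xz^3$. Scaling then normalizes the coefficient of $yz^3$ to $1$, so that
\[
f(x,y,z) \;=\; yz^3 + q_2(x,y)\,z^2 + q_3(x,y)\,z + q_4(x,y),
\]
where $q_i$ is a binary form of degree $i$ in $x,y$.

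The residual stabilizer, after these normalizations, is the four-parameter family of transformations $(x,y,z)\mapsto(ax+by,\,y,\,ex+fy+z)$ with $a\neq 0$ (the constraint $d=1$ forced by preservation of the $yz^3$-coefficient). The further normalization then branches on geometric invariants of $(C,P)$, principally on the residual divisor $C\cdot T_PC-2P$ on $T_P C$, which has degree at most $2$. The main cases are: (i) two distinct $k$-rational residual intersections; (ii) a Galois-conjugate pair; (iii) a single residual point of multiplicity $2$ (so $T_P C$ is a bitangent); (iv) $P$ a flex, with one further residual $k$-rational intersection; and (v) $P$ a hyperflex. Within each branch, subcases arise depending on whether further special points are $k$-rational. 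For instance, placing a second $k$-rational residual intersection at $(1{:}0{:}0)$ kills the coefficient of $x^4$ and leads to the forms containing an $x^3z$ term (Forms~8 and~9). The generic case, with the full residual divisor consisting of two distinct $k$-rational points and no further coincidences, produces the seven-parameter Form~1; the remaining configurations yield Forms~2--7 and~10.

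The hard part is the exhaustive case analysis and the verification that, in each branch, the surviving stabilizer cannot eliminate any of the residual coefficients $m_i$. Concretely, this is a finite but intricate piece of bookkeeping, most cleanly handled by computer algebra: one tabulates the factorization types of the one-variable polynomials arising by restricting $f$ to successive $k$-rational lines (starting with $f(x,0,1)$, which controls whether $T_PC$ is a bitangent, a flex tangent, or ordinary), applies the residual stabilizer at each stage, and reads off the normal form. The proposition then asserts that this procedure terminates in exactly the ten families listed and that together they cover every smooth plane quartic with a $k$-rational point in characteristic $\neq 2$.
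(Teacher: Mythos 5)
Your setup coincides with the paper's: move the rational point to $(0{:}0{:}1)$ and its tangent to $y=0$ (so $m_{15}=m_{10}=0$), scale the $yz^3$-coefficient to $1$, and your first-level branch on the residual divisor $C\cdot T_PC-2P$ does match the paper's top-level case split, since $m_6=0$ is exactly the condition that $P$ be a flex and $m_6=m_3=0$ that it be a hyperflex. Beyond that, however, the proposal has concrete problems. First, the residual stabilizer is misidentified: besides the maps $(x,y,z)\mapsto(ax+by,\,y,\,ex+fy+z)$ you must also keep the full diagonal torus $(x,y,z)\mapsto(rx,sy,tz)$, acting up to rescaling the whole equation by $st^3$; your constraint ``$d=1$'' only holds if you forbid rescaling the equation, which you may not do for a projective curve. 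This torus is precisely what the paper uses to normalize the coefficients of $xy^2z$ and $y^2z^2$ (resp.\ $xyz^2$, resp.\ $xy^3$, $x^4$) to $1$ in the various forms; with your four-parameter family alone (which only rescales the $xy^2z$-coefficient, via $a$), most of the ten listed normal forms are unreachable.

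Second, the sub-branching you propose --- two rational residual points, a Galois-conjugate pair, a bitangent, etc.\ --- is not what distinguishes the ten forms. In the non-flex case the residual divisor is cut out by $m_1x^2+m_6z^2$ (after killing $m_3$), and whether it splits over $k$ depends on $-m_1m_6$ being a square; Form~1 leaves $m_1$ and $m_6$ as free parameters and hence contains both split and non-split configurations, and no form in the list is separated by that dichotomy. The actual sub-branching (Cases 1(1)--(7), 2(1)--(2), 3(1) in the paper) is on the vanishing of the coefficients $m_8,m_{13},m_7,m_1$ of $xy^2z$, $y^2z^2$, $xy^3$, $x^4$ (resp.\ $m_9$ of $xyz^2$ in the flex case), i.e.\ on which coefficients the torus can scale to $1$ --- these have nothing to do with the rationality of the residual divisor. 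You would therefore either produce a different (finer) list or fail to cover it. Finally, the exhaustive verification is entirely deferred to ``computer algebra,'' the order-sensitivity of the substitutions killing $m_3,m_5,m_9$ is not addressed, and the closing observation that the remaining case $m_1=m_3=m_6=m_{10}=m_{15}=0$ cannot occur (since then $y$ divides the quartic) is missing; that last point is needed for the ten families to be exhaustive.
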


\begin{proof}
  We denote by $m_1,\ldots,m_{15}$ the coefficients of the quartic $C$, with
  its monomials ordered as
  \begin{equation}\label{eq:1}
    x^4,x^3 y,x^3 z,x^2 y^2,x^2 y z,x^2 z^2,x y^3,x y^2 z, x y z^2,x z^3,y^4,y^3
    z,y^2 z^2,y z^3,z^4.
  \end{equation}

  As there is a rational point on the curve, we can transform this point to be
  $(0:0:1)$ with tangent equal to $y=0$. We then have $m_{15}=m_{10}=0$, and we
  can scale to ensure that $m_{14}=1$. The proof now divides into
  cases.\medskip

  \noindent
  {\bf Case 1: $m_6 \ne 0$.} Consider the terms $m_6 x^2 (z^2 + m_3/m_6 x z)$.
  Then by a further change of variables $z \to z+m_3\, x/(2 m_6) $ we can
  assume $m_3=0$ without perturbing the  previous conditions. Starting with
  this new equation, we can now cancel $m_5$ in the same way, and finally $m_9$
  (note that the order in which we cancel the coefficients $m_3,m_5,m_9$ is
  important, so as to avoid re-introducing non-zero coefficients).
  \begin{enumerate}
    \item If $m_8$ and $m_{13}$ are non-zero, then we can ensure that
      $m_{8}=m_{13}=1$ by changing variables $(x:y:z) \to (r x:s y:t z)$ such
      that $m_8 r s^2 t= \alpha, \quad m_{13} s^2 t^2=\alpha, \quad s
      t^3=\alpha$ for a given $\alpha \ne 0$ and then divide the whole equation
      by $\alpha$. One calculates that it is indeed possible to find  a
      solution $(r,s,t,\alpha)$ to these equations in $k^4$.
    \item If $m_8=0 , m_{13} \ne 0 , m_7 \ne 0$, then we can transform to
      $m_{13}=m_7=1$ as above;
    \item If $m_8=0,m_{13} \ne 0 , m_7=0$, then we can transform to $m_{13}=1$;
    \item If $m_8 \ne 0, m_{13} = 0, m_7 \ne 0$, then we can transform to
      $m_8=m_7=1$;
    \item If $m_8 \ne 0 , m_7 = m_{13} = 0$, then we can transform to $m_8=1$;
    \item If $m_{13} = m_8 = 0, m_1 \ne 0$, then we can transform to $m_1=1$;
    \item If $m_{13} = m_8 = m_1 = 0$, then we need not do anything.
  \suspend{enumerate}\medskip

  \noindent
  {\bf Case 2: $m_6 = 0 , m_3 \ne 0$.} As before, working in the correct order
  we can ensure that $m_1=m_2=m_5=0$ by using the non-zero coefficient $m_3$.
  \resume{enumerate}
    \item If $m_9 \ne 0$, we can transform to $m_3=m_9=1$;
    \item If $m_9=0$, we can transform to $m_3=1$.
  \suspend{enumerate}\medskip

  \noindent
  {\bf Case 3: $m_6 = m_3 = 0$.}
  \resume{enumerate}
    \item If $m_1 \ne 0$, then put $m_1=1$. Using $m_{14}$, we can transform to
      $m_9=m_{13}=0$ and using $m_1$, we can transform to $m_2=0$.
  \end{enumerate}
  The proof is now concluded by noting that if $m_1=m_3=m_6=m_{10}=m_{15}=0$,
  then the quartic is reducible.
\end{proof}

Bergstr\"om has also found models when rational points are not available, but
these depend on as many as $9$ coefficients. Using the Hasse-Weil-Serre bound,
one shows that when $k$ is a finite field  with $\# k>29$, the models in
Proposition~\ref{prop:bergstrom} constitute an arithmetically surjective family
of dimension $7$, one more than the dimension of the moduli space.\medskip

Over finite fields $k$ of characteristic $>7$ and with $\# k \leq  29$ there
are always pointless curves~\cite{HLT}. Our experiments showed that except for
one single example, these curves all have non-trivial automorphism group. As
such, they already appear in the non-generic family. The exceptional pointless
curve, defined over $\FF_{11}$, is
{
  \begin{multline*}
    7 x^4 + 3 x^3 y + 10 x^3 z + 10 x^2 y^2 + 10 x^2 y z + 6 x^2 z^2 +7 x y^2 z\\
    + x y z^2 + 4 x z^3 + 9 y^4 + 5 y^3 z + 8 y^2 z^2 + 9 y z^3 + 9 z^4 = 0 \,.
  \end{multline*}
}

\section{Computation of twists}
\label{sec:computation-twists}

Let $C$ be a smooth plane quartic defined over a finite field $k=\mathbb{F}_q$
of characteristic $p$. In this section we will explain how to compute the
\emph{twists} of $C$, \ie the $k$-isomorphism classes of the curves isomorphic
with $C$ over $\kbar$.

Let $\Twist (C)$ be the set of twists of $C$. This set is in bijection with the
cohomology set $H^1(\Gal ( \overline{k}/k ),\Aut( C ))$,
(see~\cite[Chap.X.2]{MR2514094}). More precisely, if $\beta : C' \to C$ is any
$\overline{k}$-isomorphism, the corresponding element in $H^1(\Gal (
\overline{k}/k ),\Aut( C ))$ is given by $\sigma \mapsto \beta^{\sigma}
\beta^{-1}$.  Using the fact that $\Gal(\overline{k}/k)$ is pro-cyclic
generated by the Frobenius morphism $\varphi : x \mapsto x^q$, computing
$H^1(\Gal ( \overline{k}/k ),\Aut( C ))$ boils down to computing the
equivalence classes of $\Aut( C )$ for the relation
\[
  g \sim h  \iff \exists \alpha \in \Aut(C), \; g\alpha=\alpha^{\varphi }h ,
\]
as in~\cite[Prop.9]{MR2678623}. For a representative $\alpha$ of such a
\emph{Frobenius conjugacy class}, there will then exist a curve $C_{\alpha}$
and an isomorphism $\beta : C_{\alpha} \to C$ such that $\beta^{\varphi }
\beta^{-1} = \alpha$.

As isomorphisms between smooth plane quartics are
linear~\cite[6.5.1]{dolgacag}, $\beta$ lifts to an automorphism of $\PP^2$,
represented by an element $B$ of $\GL_3(\overline{k})$, and we will then have
that $C_{\alpha} = B^{-1} (C)$ as subvarieties of $\PP^2$. This is the curve
defined by the equation obtained by substituting $B (x,y,z)^t$ for the
transposed vector $(x,y,z)^t$ in the quartic relations defining $C$.

\subsection{Algorithm to compute the twists of a smooth plane quartic}

We first introduce a probabilistic algorithm to calculate the twists of $C$. It
is based on the explicit form of Hilbert 90 (see~\cite{MR0354618} and
\cite{MR1446124}).

Let $\alpha \in \Aut (C)$ defined over a minimal extension $\FF_{q^n}$ of $k =
\FF_q$ for some $n \geq 1$, and let $C_{\alpha}$ be the twist of $C$
corresponding to $\alpha$.  We construct the transformation $B$ from the
previous section by solving the equation $B^{\varphi} = A B$ for a suitable
matrix representation $A$ of $\alpha$. Since the curve is canonically embedded
in $\PP^2$, the representation of the action of $\Aut(C)$ on the regular
differentials gives a natural embedding of $\Aut(C)$ in $\GL_3(\FF_{q^n})$. We
let $A$ be the corresponding lift of $\alpha$ in this representation.
%
%
%
As $\Gal(\overline{\FF}_q / \FF_q)$ is topologically generated by $\varphi$ and
$\alpha$ is defined over a finite extension of $\FF_q$, there exists an integer
$m$ such that the cocycle relation $\alpha_{\sigma \tau} =
\alpha_{\tau}^{\sigma} \alpha_{\sigma}$ reduces to the equality
$A^{\varphi^{m-1}} \cdots A^{\varphi} A =\mathrm{Id}$. Using the multiplicative
form of Hilbert's Theorem 90, we let \[ B = P + \sum_{i=1}^{m-1} P^{\varphi^i}
A^{\varphi^{i-1}} \cdots A^{\varphi} A \] with $P$ a random matrix $3\times 3$
with coefficients in $\mathbb{F}_{q^m}$ chosen in such a way that at the end
$B$ is invertible. We will then have $B^{\varphi} = B A^{-1}$, the inverse of
the relation above, so that we can apply $B$ directly to the defining equation
of the quartic. Note that the probability of success of the algorithm is bigger
than ${1}/{4}$ (see \cite[Prop.1.3]{MR1446124}).

To estimate the complexity, we need to show that $m$ is not too large compared
with $n$. We have the following estimate.
\begin{lemma}
  Let $e$ be the exponent of $\Aut( C )$. Then $m\leq ne$.
\end{lemma}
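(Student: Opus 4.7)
The plan is to exhibit an explicit $m$ of the form $m = ne$ that satisfies the required cocycle identity. The key object to introduce is the ``relative norm''
\[
  N := A^{\varphi^{n-1}}\, A^{\varphi^{n-2}} \cdots A^{\varphi}\, A,
\]
together with two basic observations: that $A$ is $n$-periodic under $\varphi$ (because $\alpha$ is defined over $\FF_{q^n}$, so $A^{\varphi^n}=A$), and that $N$ lies in $\Aut(C)$ (because it is a product of elements of $\Aut(C)$, which is a group inside $\GL_3(\overline{\FF}_q)$ via the canonical representation).

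The main step is to check, by induction on $k \geq 1$, the identity
\[
  \prod_{i=0}^{kn-1} A^{\varphi^i} \;=\; N^k,
\]
where the product on the left is taken in the order of decreasing index. The inductive step simply splits off the last $n$ factors $A^{\varphi^{kn-1}} \cdots A^{\varphi^{(k-1)n}}$, rewrites each exponent modulo $n$ using $A^{\varphi^{j+n}} = A^{\varphi^j}$, and recognizes the resulting block as another copy of $N$ appearing on the right of $N^{k-1}$. The order of the product matters because $\Aut(C)$ is not abelian in general, but the periodicity lines up the factors correctly without any commuting having to be done.

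Once this identity is in hand, the conclusion is immediate: taking $k = e$, the exponent of $\Aut(C)$, gives $N^e = \mathrm{Id}$ since $N \in \Aut(C)$, hence
\[
  A^{\varphi^{ne-1}} \cdots A^{\varphi}\, A \;=\; N^e \;=\; \mathrm{Id},
\]
so $m = ne$ satisfies the cocycle condition, which proves $m \leq ne$.

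The only subtle point is the bookkeeping in the product formula; once one is careful to take the product in a consistent order and to use the periodicity $A^{\varphi^n}=A$ to fold each block of $n$ consecutive factors into a single $N$, the argument is straightforward. No descent-theoretic input beyond the definitions is required.
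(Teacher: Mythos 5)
Your proof is correct and takes essentially the same approach as the paper: both introduce the relative norm $N = A^{\varphi^{n-1}}\cdots A^{\varphi}A$ (an element of $\Aut(C)$), use the periodicity $A^{\varphi^n}=A$ to fold $ne$ consecutive factors into $N^e=\mathrm{Id}$, and conclude $m\leq ne$. The paper phrases the final step via the order of this norm element (which divides $e$) rather than via $e$ directly, but this is the same argument.
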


\begin{proof}
  By definition of $n$ we have $\alpha^{\varphi^n} = \alpha$. Let $\gamma =
  \alpha^{\varphi^{n-1}} \cdots \alpha^{\varphi} \alpha$, and let $N$ be the
  order of $\gamma$ in $\Aut_{\mathbb{F}_{q^n}}( C )$. Since
  $\gamma^{\varphi^n} = \gamma$ and $\mathrm{Id} = \gamma^N =
  \alpha^{\varphi^{N n-1}} \cdots \alpha^{\varphi} \alpha$, we can take $m \leq
  n N \leq ne$.
\end{proof}

In practice we compute $m$ as the smallest integer such that
$\alpha^{\varphi^{m-1}} \cdots \alpha^{\varphi} \alpha$ is the identity.

\subsection{How to compute the twists by hand when $\#\Aut( C )$ is small}
\label{sec:how-compute-twists}

When the automorphism group is not too complicated, it is often possible to
obtain representatives of the classes in $H^1(\Gal ( \overline{\FF}_q/\FF_q ) ,
\Aut( C ))$ and then to compute the twists by hand, a method used in genus $2$
in \cite{cardona}. We did this for $\Aut(C)=\CG_2,\DG_4,\CG_3,\DG_8,\SG_3$.

Let us illustrate this in the case of $\DG_8$. As we have seen in Theorem
\ref{thm:smallstrata}, any curve $C/\FF_q$ with $\Aut(C) \simeq \DG_8$ is
$\overline{\FF}_q$-isomorphic with some curve $x^4 + x^2 y z + y^4 + a y^2 z^2
+ b z^4$ with $a,b\in \FF_q$. The problem splits up into several cases
according to congruences of $q-1 \pmod{4}$ and the class of $b \in
\FF_q^*/(\FF_q^*)^4$. We will assume that $4\,|\,(q-1)$ and $b$ is a fourth
power, say $b=r^4$ in $\FF_q$.  The $8$ automorphisms are then defined over
$\FF_q$: if $i$ is a square root of $-1$, the automorphism group is generated
by
\[
  S= \left[\begin{smallmatrix}
    1 & 0 & 0 \\
    0 & i & 0 \\
    0 & 0 & -i
  \end{smallmatrix}\right]
  \:\textrm{and}\:\:\:
  T = \left[
  \begin{smallmatrix}
    1 & 0 & 0 \\
    0 & 0 & r \\
    0 & r^{-1} & 0
  \end{smallmatrix}\right]
  \,.
\]
Representatives of the Frobenius conjugacy classes (which in this case reduce
to the usual conjugacy classes) are then $\mathrm{Id}$, $S$, $S^2$, $T$ and
$ST$. So there are $5$ twists.

Let us give details for the computation of the twist corresponding to the class
of $T$.  We are looking for a matrix $B$ such that $T B=B^{\varphi}$ up to
scalars. We choose $B$ such that $B (x,y,z)^t = (x, \alpha y + \beta z , \gamma
y + \delta z )^t$. Then we need to solve the following system:
\[
  \alpha^{\varphi} = r \gamma \,,
  \beta^{\varphi} = r \delta \,,
  \gamma^{\varphi} = r^{-1} \alpha \,,
  \delta^{\varphi} = r^{-1} \beta .
\]
The first equation already determines $\gamma$ in terms of $\alpha$. So we need
only satisfy the compatibility condition given by the second equation. Applying
$\varphi$, we get $\alpha^{\varphi^2}= (r \gamma)^{\varphi} = r
\gamma^{\varphi} = r (\alpha / r) = \alpha$. Reasoning similarly for $\beta$
and $\delta$, we see that it suffices to find $\alpha$ and $\beta$ in
$\mathbb{F}_{q^2}$ such that
\begin{math}
  \det
  \left(
    \begin{smallmatrix}
      \alpha & \beta \\
      \alpha^{\varphi} / r & \beta^{\varphi} / r
    \end{smallmatrix}
  \right)\neq 0.
\end{math}
We can take $\alpha = \sqrt{\tau}$ and $\beta =1$, with $\tau$ a primitive
element of $\FF_q^*$. Transforming, we get the twist
\[
  x^4 + r x^2 y^2 - r \tau x^2 z^2 + (a r^2 + 2 r^4) y^4 + (-2 a r^2 \tau + 12
  r^4 \tau) y^2 z^2 + (a r^2 \tau^2 + 2 r^4 \tau^2) z^4  = 0 .
\]

\section{Implementation and experiments}
\label{sec:impl-exper}

We combine the results obtained in Sections~\ref{sec:famquart}
and~\ref{sec:computation-twists} to compute a database of representatives of
$k$-isomorphism classes of genus $3$ non-hyperelliptic curves when $k = \FF_p$
is a prime field of small characteristic $p > 7$.

\subsection{The general strategy}
We proceed in two steps. The hardest one is to compute one representative
defined over $k$ for each $\bar{k}$-isomorphism class, keeping track of its
automorphism group. Once this is done, one can apply the techniques of
Section~\ref{sec:computation-twists} to get one representative for each
isomorphism class.

In order to work out the computation of representatives for the
$\bar{k}$-isomorphism classes, the naive approach would start by enumerating
all plane quartics over $k$ by using the 15 monomial coefficients $m_1$,
\ldots, $m_{15}$ ordered as in Equation~\eqref{eq:1} and for each new curve to
check whether it is smooth and not $\bar{k}$-isomorphic to the curves we
already kept as representatives. This would have  to be done for up to $p^{15}$
curves. For $p>29$, a better option is to use Proposition~\ref{prop:bergstrom}
to reduce to a family with $7$ parameters.

In both cases, checking for $\bar{k}$-isomorphism is relatively fast as we
make use of the so-called $13$ \emph{ Dixmier-Ohno invariants}. These are
generators for the algebra of invariants of ternary quartics forms under the
action of $\SL_3(\CC)$.  Among them 7 are denoted $I_3$, $I_6$, $I_9$,
$I_{12}$, $I_{15}$, $I_{18}$ and $I_{27}$ (of respective degree 3, 6, \ldots,
27 in the $m_i$'s) and are due to Dixmier~\cite{dixmier};  one also needs 6
additional invariants that are denoted $J_{9}$, $J_{12}$, $J_{15}$, $J_{18}$,
$I_{21}$ and $J_{21}$ (of respective degree 9, 12, \ldots, 21 in the $m_i$'s)
and that are due to Ohno \cite{ohno,giko}. These invariants behave well after
reduction to $\FF_p$ for $p>7$ and the discriminant $I_{27}$ is $0$ if and only
if the quartic is singular. Moreover, if two quartics have different
Dixmier-Ohno invariants (seen as points in the corresponding weighted
projective space, see for instance \cite{LR11}) then they are not
$\bar{k}$-isomorphic. We suspect that the converse is also true (as it is over
$\CC$). This is at least confirmed for our values of $p$ since at the end we
obtain $p^6+1$ $\overline{\FF}_p$-isomorphism classes, as predicted
by~\cite{bergstrom}.

The real drawback of this approach is that we cannot keep track of the
automorphism groups of the curves, which we need in order to compute the
twists. Unlike the hyperelliptic curves of genus $3$ \cite{LR11}, for which one
can read off the automorphism group from the invariants of the curve, we lack
such a dictionary for the larger strata of plane smooth quartics.

We therefore proceed by ascending up the strata, as summarized in
Algorithm~\ref{algo:enumerate}.  In light of Proposition~\ref{prop:desrep}, we
first determine the $\bar{k}$-isomorphism classes for quartics in the small
strata by using the representative families of Theorem~\ref{thm:smallstrata}.
In this case, the parametrizing is done in an optimal way and the automorphism
group is explicitly known. Once a stratum is enumerated, we consider a higher
one and keep a curve in this new stratum if and only if its Dixmier-Ohno
invariants have not already appeared. As mentioned at the end of
Section~\ref{sec:famquart}, this approach still finds all pointless curves
(except one for $\FF_{11}$) for $p \leq 29$. We can then use the generic
families in Proposition~\ref{prop:c2} and Proposition~\ref{prop:bergstrom}.

\begin{figure}[htbp]
  \begin{center}
    \parbox{0.9\linewidth}{%
      \begin{footnotesize}\SetAlFnt{\small\sf}%
        \begin{algorithm}[H]%
          \caption{Database of representatives for
            ${\FF}_p$-isomorphism classes of smooth plane quartics} %
          \label{algo:enumerate}%
          \SetKwInOut{Input}{Input} \SetKwInOut{Output}{Output} %
          \Input{A prime characteristic $p >7$.}  %
          \Output{A list ${\mathcal L}_p$ of mutually non-$\FF_p$-isomorphic
          quartics representing all isomorphism classes of smooth plane quartics
          over
          $\FF_p$.} \BlankLine %
          ${\mathcal L}_p := \emptyset$\;
          \For{$\mathbf{G} :=$\newline
            \hspace*{1cm}
            \begin{tabular}[b]{ll}
              %
              $\mathbf{\Gg_{168}},
              \mathbf{\Gg_{96}},
              \mathbf{\Gg_{48}},
              \mathbf{\CG_9},$&\textsf{\small// Dim. 0 strata (first)}\\
              %
              $\mathbf{\CG_6},
              \mathbf{\SG_4},
              \mathbf{\Gg_{16}},$&\textsf{\small// Dim. 1 strata (then)}\\
              %
              $\mathbf{\SG_3},
              \mathbf{\CG_3},
              \mathbf{\DG_8},$&\textsf{\small// Dim. 2 strata (then)}\\
              %
              $\mathbf{\DG_4},
              \mathbf{\CG_2},
              \mathbf{\{1\}}
              $&\textsf{\small// Dim. 3, 4 and 5 strata (finally)}\\
            \end{tabular}\newline
          }
          {
            \ForAll{quartics $Q$ defined by the families of\newline
              \hspace*{1cm}
              \begin{tabular}[b]{ll}
                Theorem~\ref{thm:smallstrata} & if $\mathbf{G}$ defines a
                stratum of
                dim. $\leq 3$,\\
                Proposition~\ref{prop:c2} & if $\mathbf{G} = \CG_2$,\\
                Proposition~\ref{prop:bergstrom} & if $\mathbf{G} = \{1\}$\\
              \end{tabular}\newline
            }
            {
              $(I_3:\ I_6:\ \ldots:\ J_{21}: I_{27}) :=$ Dixmier-Ohno
              invariants of $Q$\;
              \If{${\mathcal L}_p(I_3:\ I_6:\ \ldots:\ J_{21}: I_{27})$ is not
                defined }{
                ${\mathcal L}_p(I_3:\ I_6:\ \ldots:\ J_{21}: I_{27}) := \{Q$ and
                its twists$\}$\ \ \ \tcp{cf.~Section~\ref{sec:computation-twists}}
                \lIf{${\mathcal L}_p$ contains $p^6+1$
                entries}{\KwRet{${\mathcal L}_p$}}
              }
            }
          }
        \end{algorithm}
      \end{footnotesize}
    }
  \end{center}
\end{figure}

\subsection{Implementation details}

We split our implementation of Algorithm~\ref{algo:enumerate} into two parts.
The first one, developed with the \textsc{Magma} computer algebra software,
handles quartics in the strata of dimension 0, 1, 2 and 3. These strata have
many fewer points than the ones with geometric automorphism group $\CG_2$ and
$\{1\}$ but need linear algebra routines to compute twists. The second part has
been developed in the \textsc{C}-language for two reasons: to efficiently
compute the Dixmier-Ohno invariants in the corresponding strata and to decrease
the memory needed. We now discuss these two issues.

\subsubsection{Data structures.}
\label{sec:memory-constraints}

We decided to encode elements of $\FF_p$ in bytes. This limits us to $p < 256$,
but this is not a real constraint since larger $p$ seem as yet infeasible (even
considering the storage issue).  As most of the time is spent computing
Dixmier-Ohno invariants, we group the multiplications and additions that occur
in these calculations as much as possible in 64-bit microprocessor words before
reducing modulo $p$. This decreases the number of divisions as much as
possible.

To deal with storage issues in Step 6 of Algorithm~\ref{algo:enumerate}, only
the 13 Dixmier-Ohno invariants of the quartics are made fully accessible in
memory; we store the full entries in a compressed file. These entries are
sorted by these invariants and additionally list the automorphism group, the
number of twists, and for each twist, the coefficients of a representative
quartic, its automorphism group and its number of points.

\subsubsection{Size of the hash table.}
\label{sec:size-hash-table}

We make use of an open addressing hash table to store the list ${\mathcal L}_p$
from Algorithm~\ref{algo:enumerate}. This hash table indexes $p^5$ buckets, all
of equal size $(1+\varepsilon) \times p $ for some overhead $\varepsilon$.
Given a Dixmier-Ohno 13-tuple of invariants, its first five elements
(eventually modified by a bijective linear combination of the others to get a
more uniform distribution) give us the address of one bucket of the table of
invariants. We then store the last eight elements of the Dixmier-Ohno 13-tuple
at the first free slot in this bucket. The total size of the table is thus
$8\,(1+\varepsilon) \times p^6$ bytes.

All the buckets do not contain the same number of invariants at the end of the
enumeration, and we need to fix $\varepsilon$ such that it is very unlikely
that one bucket in the hash table goes over its allocated room.
To this end, we assume that Dixmier-Ohno invariants behave like random
13-tuples, \textit{i.e.} each of them has probability $1/p^5$ to address a
bucket. Experimentally, this assumption seems to be true. Therefore the
probability that one bucket $\mathcal B$ contains $n$ invariants after $k$
trials follows a binomial distribution,
\begin{displaymath}
  \Prob({\mathcal B} = n) = \binom{n}{k}\times \frac{(p^5-1)^{k-n}}{(p^5)^k} =
  \binom{n}{k} \times \left(\frac{1}{p^5}\right)^n\times \left(1-\frac{1}{p^5}\right)^{k-n} \,.
\end{displaymath}
\begin{wrapfigure}{r}{0.5\textwidth}
    \resizebox{0.5\textwidth}{0.15\textheight}{
      \includegraphics{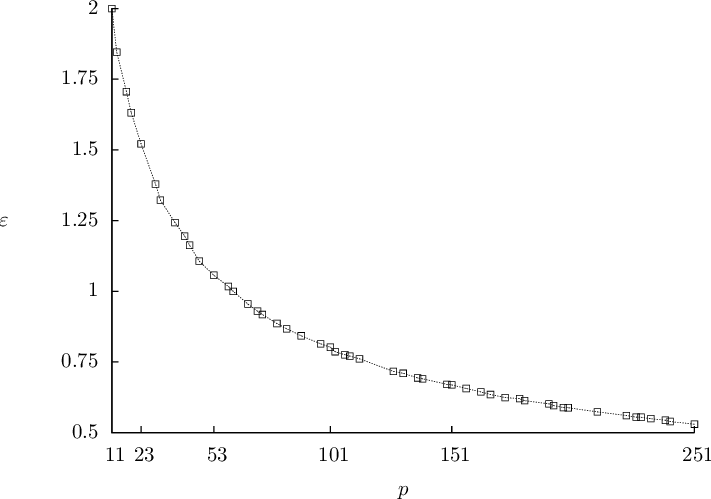}
    }
    \vspace*{-0.5cm}
    \caption{Overhead $\varepsilon$}
    \label{fig:epsilon}
    \vspace*{-0.5cm}
\end{wrapfigure}
Now let $k \approx p^6$. Then $k\times (1/p^5) \approx p$, which is a fixed
small parameter. In this setting, Poisson approximation yields $\Prob({\mathcal
B} = n) \simeq p^n\, e^{-p} / n\,!$, so the average number of buckets that
contain $n$ entries at the end is about $p^5\,\Prob({\mathcal B} = n) \simeq
p^{5+n}\,e^{-p} / n\,!$ and it remains to choose $n = (1+\varepsilon)\,p$, and
thus $\varepsilon$, such that this probability is negligible. We draw
$\varepsilon$ as a function of $p$ when this probability is smaller than
$10^{-3}$ in Figure~\ref{fig:epsilon}. For $p=53$, this yields a hash table of
340 gigabytes.

\subsection{Results and first observations}
\label{sec:results}

We have used our implementation of Algorithm~\ref{algo:enumerate} to compute
the list ${\mathcal L}_p$ for primes $p$ between 11 and 53. Table~\ref{tab:Lp}
gives the corresponding timings and database sizes (once stored in a compressed
file).
Because of their size, only the databases $\mathcal{L}_p$ for $p = 11$ or $p =
13$, and a program to use them, are available online\footnote{
  \url{http://perso.univ-rennes1.fr/christophe.ritzenthaler/programme/qdbstats-v3_0.tgz}.
}.

\begin{table}[htbp]
  \renewcommand{\arraystretch}{1.3}\addtolength{\tabcolsep}{-3pt}
  \centering
  \begin{scriptsize}
    \begin{tabular}{l|cccccccccccc}
      $p$ & 11 & 13 & 17 & 19 & 23 & 29 & 31 & 37 & 41 & 43 & 47 & 53 \\
      \hline
      \hline
      Time & 42s & 1m 48s & 10m  & 20m 30s& 1h 7m  & 4h 36m & 6h 48m & 22h 48m &
      1d 23h & 2d 7h & 5d 22h & 7d 19 \\
      Db size & 27Mb & 68Mb & 377Mb & 748Mb & 2.5Gb & 11.5 Gb & 16Gb & 51Gb &
      97Gb & 128Gb & 224Gb & 460Gb\\
      \hline
    \end{tabular}
  \end{scriptsize}\medskip
  \caption{Calculation of ${\mathcal L}_p$ on a 32 AMD-Opteron 6272 based server}
  \label{tab:Lp}
\end{table}

As a first use of our database, and as a sanity check, we can try to
interpolate formulas for the number of $\FF_p$- or
$\overline{\FF}_p$-isomorphism classes of genus 3 plane quartics over $\FF_p$
with given automorphism group. The resulting polynomials in $p$ are given in
Table~\ref{tab:auto}. The `$+[a]_{\ \mathrm{condition}}$' notation means that
$a$ should be added if the `condition' holds.

\begin{table}[htbp]
  \renewcommand{\arraystretch}{1.5}\addtolength{\tabcolsep}{-3pt}
  \centering
  \begin{scriptsize}
  \begin{tabular}{l|ll}
      $G$ & \#$\overline{\FF}_p$-isomorphism classes &
      \#${\FF}_p$-isomorphism classes \\\hline\hline
      %
      $\mathbf{\Gg_{168}}$& $\mathbf{1}$&$\mathbf{4+[2]_{p = 1,2,4 \bmod 7}}$\\
      $\mathbf{\Gg_{96}}$& $\mathbf{1}$&$\mathbf{6+[4]_{4|p+3}}$\\
      $\Gg_{48}$& $\mathbf{1}$& $\mathbf{4+[10]_{12 | p+11}+[2]_{12|p +7}+[4]_{12|p+5}}$ \\
      $\mathbf{\CG_9}$ & $\mathbf{1}$ & $\mathbf{1+[8]_{9|p+8}+[2]_{9|p+5}+[6]_{9|p+2}}$\\\hline
      %
      $\mathbf{\CG_6}$& $\mathbf{p-2}$ & $\mathbf{2\times  (1+[2]_{3|p+2}) \times \#\overline{\FF}_p}$\textbf{-iso.}   \\
      $\mathbf{\SG_4}$& $\mathbf{p-4-[2]_{p = 1,2,4 \bmod 7}}$ &$\mathbf{5\times \#\overline{\FF}_p}$\textbf{-iso}.\\
      $\Gg_{16}$& $\mathbf{p-2}$ & $\mathbf{2 \times (2\,(p-3) + [p-2]_{4|p+3})}$\\\hline
      %
      $\mathbf{\SG_3}$&$\mathbf{p^2-3\,p+4+[2]_{p = 1,2,4 \bmod 7}}$&$\mathbf{3\times \#\overline{\FF}_p}$\textbf{-iso.}\\
      $\mathbf{\CG_3}$&$\mathbf{p^2-p}$&$\mathbf{(1+[2]_{3|p+2})\times \#\overline{\FF}_p}$\textbf{-iso.}\\
      $\mathbf{\DG_8}$&$\mathbf{p^2-4\,p+6+[2]_{p = 1,2,4 \bmod 7}}$&$\mathbf{4\times\#\overline{\FF}_p}$\textbf{-iso}.$\mathbf{-3\,p+8}$\\\hline
      %
      $\mathbf{\DG_4}$&$\mathbf{{p}^{3}-3\,{p}^{2}+5\,p-5}$&$\mathbf{2\,{p}^{3}-8\,{p}^{2}+17\,p-19}$\\\hline
      %
      $\mathbf{\CG_2}$&${p}^{4}-2\,{p}^{3}+2\,{p}^{2}-3\,p+1-[2]_{p = 1,2,4 \bmod 7}$&$\mathbf{2\times \#\overline{\FF}_p}$\textbf{-iso.}\\\hline
      %
      $\mathbf{\{1\}}$&${p}^{6}-{p}^{4}+{p}^{3}-2\,{p}^{2}+3\,p-1$&$\mathbf{\#\overline{\FF}_p}$\textbf{-iso.}\\\hline\hline
      Total &$\mathbf{p^6+1}$&$
      \begin{array}[t]{r}
        {p}^{6}+{p}^{4}-{p}^{3}+2\,{p}^{2}-4\,p-1+2\,(p \bmod 4)\hfill\\
        +2\,[{p}^{2}+p+2-(p \bmod 4)]_{p = 1,4,7 \bmod 9}
         +[6]_{p = 1  \bmod 9} \\
        +[2\,p+6]_{p=1 \bmod 4}
        +[2]_{p = 1,2,4 \bmod 7}\\

      \end{array}
      $\\\hline
    \end{tabular}
  \end{scriptsize}\medskip
  \caption{Number of isomorphism classes of plane quartics
  with given automorphism group}
  \label{tab:auto}
\end{table}

Most of these formulas can actually be proved (we \textbf{emphasize} the ones
we are able to prove in Table~\ref{tab:auto}). In particular, it is possible to
derive the number of most of the \#$\overline{\FF}_p$-isomorphic classes from
the representative families given in Theorem~\ref{thm:smallstrata}; one merely
needs to consider the degeneration conditions between the strata. For example,
for the strata of dimension $1$, the singularities at the boundaries of the
strata of dimension $1$ corresponding to strata with larger automorphism group
are given by $\FF_p$-points, except for the stratum $\SG_4$. The latter stratum
corresponds to singular curves for $a \in \left\{ -2,-1,2 \right\}$, and the
Klein quartic corresponds to $a = 0$. But the Fermat quartic corresponds to
both roots of the equation $a^2 + 3 a + 18$ (note that the family for the
stratum $\SG_4$ is no longer representative at that boundary point).  The
number of roots of this equation in $\FF_p$ depends on the congruence class of
$p$ modulo $7$.

One proceeds similarly for the other strata of small dimension; the above
degeneration turns out to be the only one that gives a dependence on $p$. To
our knowledge, the point counts for the strata $\CG_2$ and $\{1\}$ are still
unproved. Note that the total number of $\overline{\FF}_p$-isomorphism classes
is known to be $p^6+1$ by~\cite{bergstrom}, so the number of points on one
determines the one on the other.

Determining the number of twists is a much more cumbersome task, but can still
be done by hand  by making explicit the cohomology classes of
Section~\ref{sec:computation-twists}.  For the automorphism groups
$\mathbf{\Gg_{168}}$, $\mathbf{\Gg_{96}}$, $\Gg_{48}$ and $\mathbf{\SG_4}$,  we
have recovered the results published by Meagher and Top in~\cite{MR2678623} (a
small subset of the curves defined over $\FF_p$ with automorphism group
$\Gg_{16}$ was studied there as well).

\subsection{Distribution according to the number of points}
\label{subsec:dist}

Once the lists $\mathcal{L}_p$ are determined, the most obvious invariant
function on this set of isomorphism classes is the number of rational points of
a representative of the class. To observe the distributions of these classes
according to their number of points was the main motivation of our extensive
computation.  In Appendix~\ref{sec:num}, we give some graphical interpretations
of the results for prime fields $\FF_p$ with $11 \leq p \leq 53$\footnote{The
  numerical values we used for these graphs can be found
  at~\url{http://perso.univ-rennes1.fr/christophe.ritzenthaler/programme/qdbstats-v3_0.tgz}.
}.

Although we are still at an early stage of exploiting the data, we can make the
following remarks:
\begin{enumerate}
  \item Among the curves whose number of points is maximal or minimal, there
    are only curves with non-trivial automorphism group, except for a pointless
    curve over $\FF_{11}$ mentioned at the end of Section~\ref{sec:big}. While
    this phenomenon is not true in general (see for
    instance~\cite[Tab.2]{Rit-serre} using the form $43,\# 1$ over
    $\FF_{167}$), it shows that the usual recipe to construct maximal curves,
    namely by looking in families with large non-trivial automorphism groups,
    makes sense over small finite fields. It also shows that to observe the
    behavior of our distribution at the borders of the Hasse-Weil interval, we
    have to deal with curves with many automorphisms, which justifies the
    exhaustive search we made.
  \item Defining the trace $t$ of a curve $C/\FF_q$ by the usual formula
    $t=q+1-\#C(\FF_q)$, one sees in Fig.~\ref{fig:1} that the ``normalized
    trace'' $\tau=t/ \sqrt{q} $ accurately follows the asymptotic distribution
    predicted by the general theory of Katz-Sarnak~\cite{katz-sarnak}. For
    instance, the theory predicts that the mean normalized trace should
    converge to zero when $q$ tends to infinity. We found the following
    estimates for $q=11,17,23,29,37,53$: $$4 \cdot 10^{-3}, \quad 1 \cdot
    10^{-3}, \quad 4 \cdot 10^{-4}, \quad 2 \cdot 10^{-4}, \quad 6 \cdot
    10^{-5}, \quad 3 \cdot 10^{-5}.$$
  \item Our extensive computations enable us to spot possible fluctuations with
    respect to the symmetry of the limit distribution of the trace, a
    phenomenon that to our knowledge has not been encountered before (see
    Fig.~\ref{fig:2}). These fluctuations are related to the \emph{Serre's
    obstruction} for genus $3$~\cite{Rit-serre} and do not appear for genus
    $\leq 2$ curves. Indeed, for these curves (and more generally for
    hyperelliptic curves of any genus), the existence of a quadratic twist
    makes the distribution completely symmetric.  The fluctuations also cannot
    be predicted by the general theory of Katz and Sarnak, since this theory
    depends only on the monodromy group, which is the same for curves,
    hyperelliptic curves or abelian varieties of a given genus or dimension.
    Trying to understand this new phenomenon is a challenging task and indeed
    the initial purpose of constructing our database.
\end{enumerate}

\bibliographystyle{abbrv}

\begin{thebibliography}{10}

\bibitem{abramovich}
D.~Abramovich and F.~Oort.
\newblock Alterations and resolution of singularities.
\newblock In {\em Resolution of singularities ({O}bergurgl, 1997)}, volume 181
  of {\em Progr. Math.}, pages 39--108. Birkh\"auser, Basel, 2000.

\bibitem{artqui}
M.~Artebani and S.~Quispe.
\newblock Fields of moduli and fields of definition of odd signature curves.
\newblock {\em Arch. Math.}, 99(4):333--344, 2012.

\bibitem{bars}
F.~Bars.
\newblock Automorphism groups of genus $3$ curves.
\newblock Notes del seminari Corbes de G{\`e}neres $3$, 2006.

\bibitem{bergstrom}
J.~Bergstr{\"o}m.
\newblock {\em Master's thesis}.
\newblock PhD thesis, Kungl. Tekniska H{\"o}gskolan, Stockholm, 2001.

\bibitem{berg}
J.~Bergstr{\"o}m.
\newblock Cohomology of moduli spaces of curves of genus three via point
  counts.
\newblock {\em J. Reine Angew. Math.}, 622:155--187, 2008.

\bibitem{cardona}
G.~Cardona.
\newblock On the number of curves of genus 2 over a finite field.
\newblock {\em Finite Fields Appl.}, 9(4):505--526, 2003.

\bibitem{dixmier}
J.~Dixmier.
\newblock On the projective invariants of quartic plane curves.
\newblock {\em Adv. in Math.}, 64:279--304, 1987.

\bibitem{dolgacag}
I.~V. Dolgachev.
\newblock {\em Classical algebraic geometry}.
\newblock Cambridge University Press, Cambridge, 2012.
\newblock A modern view.

\bibitem{FS11}
D.~M. Freeman and T.~Satoh.
\newblock Constructing pairing-friendly hyperelliptic curves using {W}eil
  restriction.
\newblock {\em J. of Number Theory}, 131(5):959–983, 2011.

\bibitem{giko}
M.~Girard and D.~R. Kohel.
\newblock {Classification of genus 3 curves in special strata of the moduli
  space.}
\newblock {Hess, Florian (ed.) et al., Algorithmic number theory. 7th
  international symposium, ANTS-VII, Berlin, Germany, July 23--28, 2006.
  Proceedings. Berlin: Springer. Lecture Notes in Computer Science 4076,
  346-360 (2006).}, 2006.

\bibitem{MR1446124}
S.~P. Glasby and R.~B. Howlett.
\newblock Writing representations over minimal fields.
\newblock {\em Comm. Algebra}, 25(6):1703--1711, 1997.

\bibitem{gorvivi}
S.~Gorchinskiy and F.~Viviani.
\newblock Picard group of moduli of hyperelliptic curves.
\newblock {\em Math. Z.}, 258(2):319--331, 2008.

\bibitem{grossharris}
B.~H. Gross and J.~Harris.
\newblock On some geometric constructions related to theta characteristics.
\newblock In {\em Contributions to automorphic forms, geometry, and number
  theory}, pages 279--311. Johns Hopkins Univ. Press, Baltimore, MD, 2004.

\bibitem{gv12}
A.~Guillevic and D.~Vergnaud.
\newblock Genus 2 hyperelliptic curve families with explicit jacobian order
  evaluation and pairing-friendly constructions.
\newblock In {\em Pairing-based cryptography---{P}airing 2012}, volume 7708 of
  {\em Lecture Notes in Comput. Sci.}, pages 234--253. Springer, Berlin, 2012.

\bibitem{harris-moduli}
J.~Harris and I.~Morrison.
\newblock {\em Moduli of curves}, volume 187 of {\em Graduate Texts in
  Mathematics}.
\newblock Springer-Verlag, New York, 1998.

\bibitem{harris-mumford}
J.~Harris and D.~Mumford.
\newblock On the {K}odaira dimension of the moduli space of curves.
\newblock {\em Invent. Math.}, 67(1):23--88, 1982.
\newblock With an appendix by William Fulton.

\bibitem{henn}
H.-W. Henn.
\newblock Die {A}utomorphismengruppen der algebraischen {F}unktionenkorper vom
  {G}eschlecht $3$, 1976.

\bibitem{homma}
M.~Homma.
\newblock Automorphisms of prime order of curves.
\newblock {\em Manuscripta Math.}, 33(1):99--109 (1980).

\bibitem{HLT}
E.~W. Howe, K.~E. Lauter, and J.~Top.
\newblock Pointless curves of genus three and four.
\newblock In {\em Algebra, Geometry, and Coding Theory (AGCT 2003) (Y. Aubry
  and G. Lachaud, eds.)}, volume~11 of {\em S\'eminaires et Congr\`es}.
  Soci\'et\'e Math\'ematique de France, Paris, 2005.

\bibitem{huggins-thesis}
B.~Huggins.
\newblock {\em Fields of moduli and fields of definition of curves}.
\newblock PhD thesis, University of California, Berkeley, Berkeley, California,
  2005.
\newblock \url{http://arxiv.org/abs/math.NT/0610247}.

\bibitem{katsylo}
P.~Katsylo.
\newblock {Rationality of the moduli variety of curves of genus 3.}
\newblock {\em Comment. Math. Helv.}, 71(4):507--524, 1996.

\bibitem{katz-sarnak}
N.~M. Katz and P.~Sarnak.
\newblock {\em Random matrices, {F}robenius eigenvalues, and monodromy},
  volume~45 of {\em American Mathematical Society Colloquium Publications}.
\newblock American Mathematical Society, Providence, RI, 1999.

\bibitem{LR11}
R.~Lercier and C.~Ritzenthaler.
\newblock Hyperelliptic curves and their invariants: geometric, arithmetic and
  algorithmic aspects.
\newblock {\em J. Algebra}, 372:595--636, 2012.

\bibitem{lrs}
R.~Lercier, C.~Ritzenthaler, and J.~Sijsling.
\newblock Fast computation of isomorphisms of hyperelliptic curves and explicit
  descent.
\newblock In E.~W. Howe and K.~S. Kedlaya, editors, {\em Proceedings of the
  Tenth Algorithmic Number Theory Symposium}, pages 463--486. Mathematical
  Sciences Publishers, 2012.

\bibitem{lonsted}
K.~L{\o}nsted.
\newblock The structure of some finite quotients and moduli for curves.
\newblock {\em Comm. Algebra}, 8(14):1335--1370, 1980.

\bibitem{magaard}
K.~Magaard, T.~Shaska, S.~Shpectorov, and H.~Völklein.
\newblock The locus of curves with prescribed automorphism group.
\newblock Communications in arithmetic fundamental groups (Kyoto, 1999/2001).
S\=urikaisekikenky\=usho K\=oky\=uroku No. 1267 (2002), 112--141.

\bibitem{MR2678623}
S.~Meagher and J.~Top.
\newblock Twists of genus three curves over finite fields.
\newblock {\em Finite Fields Appl.}, 16(5):347--368, 2010.

\bibitem{newstead}
P.~E. Newstead.
\newblock {\em Introduction to moduli problems and orbit spaces}, volume~51 of
  {\em Tata Institute of Fundamental Research Lectures on Mathematics and
  Physics}.
\newblock Tata Institute of Fundamental Research, Bombay, 1978.

\bibitem{ohno}
T.~Ohno.
\newblock The graded ring of invariants of ternary quartics {I}, 2005?
\newblock unpublished.

\bibitem{agmri}
C.~Ritzenthaler.
\newblock Point counting on genus 3 non hyperelliptic curves.
\newblock In {\em Algorithmic number theory}, volume 3076 of {\em Lecture Notes
  in Comput. Sci.}, pages 379--394. Springer, Berlin, 2004.

\bibitem{Rit-serre}
C.~Ritzenthaler.
\newblock Explicit computations of {S}erre's obstruction for genus-3 curves and
  application to optimal curves.
\newblock {\em LMS J. Comput. Math.}, 13:192--207, 2010.

\bibitem{rokaeus}
K.~R{\"o}kaeus.
\newblock Computer search for curves with many points among abelian covers of
  genus 2 curves.
\newblock In {\em Arithmetic, geometry, cryptography and coding theory}, volume
  574 of {\em Contemp. Math.}, pages 145--150. Amer. Math. Soc., Providence,
  RI, 2012.

\bibitem{satoh}
T.~Satoh.
\newblock Generating genus two hyperelliptic curves over large characteristic
  finite fields.
\newblock In {\em Advances in Cryptology: EUROCRYPT 2009, Cologne}, volume
  5479, Berlin, 2009. Springer.

\bibitem{seki}
T.~Sekiguchi.
\newblock Wild ramification of moduli spaces for curves or for abelian
  varieties.
\newblock {\em Compositio Math.}, 54:33--372, 1985.

\bibitem{MR0354618}
J.-P. Serre.
\newblock {\em Corps locaux}.
\newblock Hermann, Paris, 1968.
\newblock Deuxi{\`e}me {\'e}dition, Publications de l'Universit{\'e} de
  Nancago, No. VIII.
%

\bibitem{shioda-quartic}
T.~Shioda.
\newblock Plane quartics and {M}ordell-{W}eil lattices of type {$E_7$}.
\newblock {\em Comment. Math. Univ. St. Paul.}, 42(1):61--79, 1993.

\bibitem{MR2514094}
J.~H. Silverman.
\newblock {\em The arithmetic of elliptic curves}, volume 106 of {\em Graduate
  Texts in Mathematics}.
\newblock Springer, Dordrecht, second edition, 2009.

\bibitem{varshavsky}
Y.~Varshavsky.
\newblock On the characterization of complex {S}himura varieties.
\newblock {\em Selecta Math. (N.S.)}, 8(2):283--314, 2002.

\bibitem{vermeulen}
A.~Vermeulen.
\newblock {\em Weierstrass points of weight two on curves of genus three}.
\newblock PhD thesis, university of Amsterdam, Amsterdam, 1983.

\bibitem{weber}
H.~Weber.
\newblock {Theory of abelian functions of genus 3. (Theorie der Abel'schen
  Functionen vom Geschlecht 3.)}, 1876.

\bibitem{wei56}
A.~Weil.
\newblock The field of definition of a variety.
\newblock {\em American Journal of Mathematics}, 78:509--524, 1956.

\end{thebibliography}

\appendix

\section{Generators and normalizers}
\label{sec:gener-norm}

As mentioned in Remark~\ref{rem:conj}, the automorphism groups in
Theorem~\ref{thm:aut} have the property that their isomorphism class determines
their conjugacy class in $\PGL_3 (K)$. Accordingly, the families of curves in
Theorem~\ref{thm:aut} have been chosen in such a way that they allow a common
automorphism group as subgroup of $\PGL_3 (K)$. We proceed to describe the
generators and normalizers of these subgroups; these can be found by direct
computation or by using~\cite[Lemma 2.3.8]{huggins-thesis}. The generators that
we give below of the automorphism groups themselves are in fact lifts to $\GL_3
(K)$ obtained by considering the tangent representation in terms of the basis
of differentials corresponding to the monomials $x, y, z$.

In what follows, we consider $\GL_2 (K)$ as a subgroup of $\PGL_3 (K)$ via the
map $A \mapsto \left[ \begin{smallmatrix} 1 & 0 \\ 0 & A \end{smallmatrix}
\right]$. The group $D (K)$ is the group of diagonal matrices in $\PGL_3 (K)$,
and $T (K)$ is its subgroup consisting of those matrices in $D (K)$ that are
non-trivial only in the upper left corner. We consider $\SG_3$ as a subgroup
$\widetilde{\SG}_3$ of $\PGL_3 (K)$ by the permutation action that it induces
on the coordinate functions, and we denote by $\widetilde{\SG}_4$ the degree
$2$ lift of $\SG_4$ to $\PGL_3 (K)$ generated by the matrices\medskip
\begin{footnotesize}
  \[
  \begin{bmatrix}
    1 & 0 & 0 \\
    0 & \zeta_8 & 0 \\
    0 & 0 & \zeta_8^{-1}
  \end{bmatrix}\,,
  \ \ \ \frac{-1}{i + 1}
  \begin{bmatrix}
    1 & 0 & 0 \\
    0 & \zeta_4 & -\zeta_4 \\
    0 & 1 & 1
  \end{bmatrix}
  \,.
  \]
\end{footnotesize}\bigskip

\begin{theorem}\label{thm:norm}
  The following are generators for the automorphism groups $G$ in
  Theorem~\ref{thm:aut}, along with the isomorphism classes and generators of
  their normalizers $N$ in $\PGL_3 (K)$.
  \begin{enumerate}
    \item $\{ 1 \}$ is generated by the unit element. $N = \PGL_3 (K)$.
    \item $\CG_2 = \langle \alpha \rangle$, where $\alpha (x, y, z) = (x, -y,
      -z)$. $N = \GL_2 (K)$.
    \item $\DG_4 = \langle \alpha, \beta \rangle$, where $\alpha (x, y, z) =
      (x, -y, -z)$ and $\beta (x, y, z) = (-x, y, -z)$. $N = D (K) \rtimes
      \widetilde{\SG}_3$.
    \item $\CG_3 = \langle \alpha \rangle$, where $\alpha (x, y, z) =
      (\zeta_3^{-1} x, \zeta_3 y, \zeta_3 z)$. $N = \GL_2 (K)$.
    \item $\DG_8 = \langle \alpha, \beta \rangle$, where $\alpha (x, y, z) =
      (x, \zeta_4 y, \zeta_4^{-1} z)$ and $\beta (x, y, z) = (-x, -z, -y)$. $N
      = T(K) \widetilde{\SG}_4$.
    \item $\SG_3 = \langle \alpha, \beta \rangle$, where $\alpha (x, y, z) =
      (x, \zeta_3 y, \zeta_3^{-1} z)$ and $\beta (x, y, z) = (-x, -z, -y)$. $N
      = T(K) \widetilde{\SG_3}$.
    \item $\CG_6 = \langle \alpha \rangle$, where $\alpha (x, y, z) =
      (-\zeta_3^{-1} x, \zeta_3 y, -\zeta_3 z)$. $N = D (K)$.
    \item $\Gg_{16} = \langle \alpha, \beta, \gamma \rangle$, where $\alpha (x,
      y, z) = (-x, \zeta_4 y, \zeta_4 z)$,  $\beta (x, y, z) = (-x, y, -z)$,
      and $\gamma (x, y, z) = (-x, -z, -y)$. $N = T(K) \widetilde{\SG}_4$.
    \item $\SG_4 = \langle \alpha, \beta, \gamma, \delta \rangle$, where
      $\alpha (x, y, z) = (x, -y, -z)$,  $\beta (x, y, z) = (-x, y, -z)$,
      $\gamma (x, y, z) = (y, z, x)$, and $\delta (x, y, z) = (-y, -x, -z)$.
      $N = G$.
    \item $\CG_9 = \langle \alpha \rangle$, where $\alpha (x, y, z) = (x,
      \zeta_9^2 y, \zeta_9^{-4} z)$. $N = D(K)$.
    \item $\Gg_{48} = \langle \alpha, \beta, \gamma \rangle$, where $\alpha
      (x, y, z) = (-x, \zeta_4 y, \zeta_4 z)$, $\beta (x, y, z) = (\zeta_3 x,
      \zeta_3^{-1} y, \zeta_3 z)$,  and $\gamma (x, y, z) = -(1 / \sqrt{3})
      (\sqrt{3} x, y + 2 z, y - z)$. $N$ is $\PGL_3 (K)$-conjugate to $N = T(K)
      \widetilde{\SG}_4$.
    \item $\Gg_{96} = \langle \alpha, \beta, \gamma, \delta \rangle$, where
      $\alpha (x, y, z) = (-x, \zeta_4 y, \zeta_4 z)$, $\beta (x, y, z) =
      (\zeta_4 x, -y, \zeta_4 z)$, $\gamma (x, y, z) = (y, z, x)$,  and $\delta
      (x, y, z) = (-y, -x, -z)$. $N = G$.
    \item $\Gg_{168} = \langle \alpha, \beta, \gamma \rangle$, where $\alpha
      (x, y, z) = (\zeta_7 x, \zeta_7^4 y, \zeta_7^2 z)$,  $\beta(x, y, z) =
      (y, z, x)$,  and
      \begin{align*}
        \gamma (x, y, z) = \frac{-1}{\sqrt{-7}} (
          & (\zeta_7^4 - \zeta_7^3) x + (\zeta_7 - \zeta_7^6) y + (\zeta_7^2 - \zeta_7^5) z, \\
          & (\zeta_7 - \zeta_7^6) x + (\zeta_7^2 - \zeta_7^5) y + (\zeta_7^4 - \zeta_7^3) z, \\
          & (\zeta_7^2 - \zeta_7^5) x + (\zeta_7^4 - \zeta_7^3) y + (\zeta_7 - \zeta_7^6) z  ) ,
      \end{align*}
      where $\sqrt{-7}$ is the Gauss sum $\sum_{i=1}^6 \left(\frac{7}{i}\right)
      \zeta_7^i$. $N = G$.
  \end{enumerate}
\end{theorem}

\section{Numerical results}
\label{sec:num}

Given a prime number $p$, we let $N_{p,3}(t)$ denote the number of
$\FF_p$-isomorphism classes of non-hyperelliptic curves of genus $3$ over
$\FF_p$ whose trace equals $t$. 
Define
$$N^{\textrm{KS}}_{p,3}(\tau) = \frac{\sqrt{p}}{\# \Mm_3(\FF_p)} \cdot N_{p,3}(t), \quad t=\lfloor \sqrt{p} \cdot \tau \rfloor, \quad  \tau \in [-6,6],$$
which is the normalization of the distribution of the trace as
in~\cite{katz-sarnak}. Our numerical results are summarized in
Fig.~\ref{fig:3}.

\begin{figure}[htbp]\label{fig:KSref}
  \centering
  \subcaptionbox%
  {Graph of $N^{\textrm{KS}}_{p,3}(\tau)$ \label{fig:1}}%
  {\includegraphics[height=6cm]{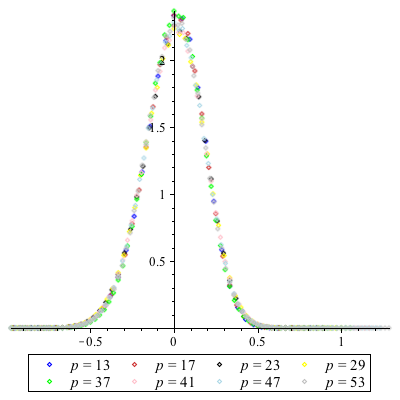}}
  \subcaptionbox%
  {Graph of $N^{\textrm{KS}}_{p,3}(\tau)-N^{\textrm{KS}}_{p,3}(-\tau)$ \label{fig:2}}%
  {\includegraphics[height=6cm]{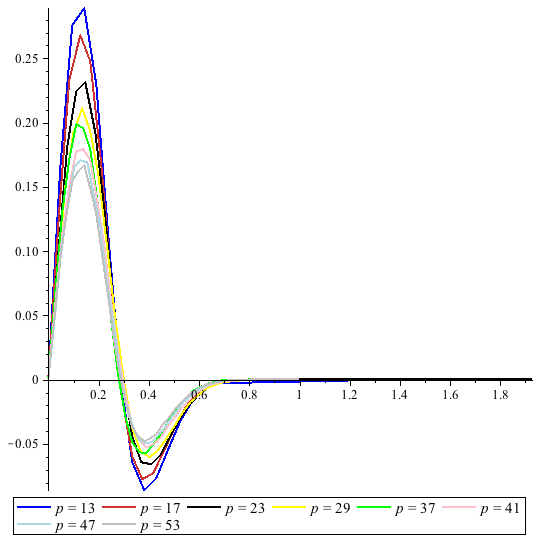}}
  \caption{Trace distribution \label{fig:3}}
  \end{figure}
%
%
%
%
%

\end{document}